\theoremstyle{plain}
\newtheorem{theorem}{Theorem}[section]
\newtheorem{lemma}[theorem]{Lemma}
\theoremstyle{remark}
\newif \ifPDF
\ifPDF \usepackage[breaklinks,bookmarks=false]{hyperref} \fi
\renewcommand{\dim}{d}
\newcommand{\abs}[1]{\lvert#1\rvert}
\newcommand{\tends}{\rightarrow}
\newcommand{\norm}[1]{\lVert#1\rVert}
\newcommand{\p}{\partial}
\DeclareMathOperator{\Div}{div}
\newcommand{\R}{\mathbb{R}}
\newcommand{\Om}{\Omega}
\newcommand{\om}{\omega}
\newcommand{\GD}{\Gamma_{\mathrm D}}
\newcommand{\GN}{\Gamma_{\mathrm N}}
\newcommand{\DO}{\partial\Om}
\newcommand{\calP}{\mathcal{P}}
\newcommand{\VK}{\mathcal{V}_K}
\newcommand{\Fa}{\mathcal{F}_\ver}
\newcommand{\calF}{\mathcal{F}}
\newcommand{\calFdag}{\mathcal{F}_{\dagger}}
\newcommand{\calT}{\mathcal{T}}
\newcommand{\calV}{\mathcal{V}}
\newcommand{\calVint}{\mathcal{V}_{\mathrm{int}}}
\newcommand{\calVext}{\mathcal{V}_{\mathrm{ext}}}
\newcommand{\calFext}{\mathcal{F}_{\mathrm{ext}}}
\newcommand{\bL}{\bm{L}^2(\Om)}
\newcommand{\RTN}{\bm{RTN}_p(\calT)}
\newcommand{\RTNc}{\bm{RTN}_{p^\prime}(\calT)}
\newcommand{\RTNK}{\bm{RTN}_p(K)}
\newcommand{\RTNa}{\bm{RTN}_p(\Ta)}
\newcommand{\RTNl}{\bm{RTN}_{p-1}(\calT)}
\newcommand{\Hdiv}{\bm{H}(\Div)}
\newcommand{\Hdivo}{\bm{H}(\Div,\Om)}
\newcommand{\Vh}{V_h}
\newcommand{\bPi}{\bm{\Pi}^{\bm{RTN}}_{hp}}
\newcommand{\bPialt}{\bm{\Pi}_{hp}}
\newcommand{\Pih}{\Pi_{hp}}
\newcommand{\sh}{\bm{\sigma}_h}
\newcommand{\Ta}{\calT^{\ver}}
\newcommand{\oma}{{\om_{\ver}}}
\newcommand{\Hdiva}{\bm{H}(\Div,\oma)}
\newcommand{\psia}{\psi_{\ver}}
\newcommand{\etaosca}{\eta^{\ver}_{\mathrm{osc}}}
\newcommand{\btaua}{\bm{\tau}_h^{\ver}}
\newcommand{\bxi}{\bm{\xi}}
\newcommand{\sha}{\bm{\sigma}_h^{\ver}}
\newcommand{\shad}{\bm{\widetilde{\sigma}}_h^{\ver}}
\newcommand{\rha}{r_h^{\ver}}
\newcommand{\Qa}{Q_h^{\ver}}
\newcommand{\Va}{\bm{V}_h^\ver}
\newcommand{\Vad}{\bm{\widetilde{V}}_{h}^{\ver}}
\newcommand{\Comdag}{C_{P,\dagger}}
\newcommand{\Hdag}{H^1_{\dagger}(\Om)}
\newcommand{\psid}{\psi_{\dagger}}
\newcommand{\Gd}{\Gamma_{N,\dagger}}
\newcommand{\shd}{\bm{\sigma}_h^{\dagger}}
\newcommand{\shc}{\bm{\sigma}_h^{\mathrm{c}}}
\newcommand{\shu}{\bm{\widetilde{\sigma}}_h}
\newcommand{\ver}{{\bm{a}}}
\newcommand{\verbis}{\mathbf{a}}
\newcommand{\bvh}{\bm{v}_h}
\newcommand{\bv}{\bm{v}}
\newcommand{\bn}{\bm{n}}
\title{Discrete $\MakeLowercase{p}$-robust $\Hdiv$-liftings and a posteriori estimates for elliptic problems with $H^{-1}$ source terms\thanks{This
project has received funding from the European Research Council (ERC)
under the European Union's Horizon 2020 research and innovation program
(grant agreement No 647134 GATIPOR).}}
\author{Alexandre~Ern\footnotemark[2] \and Iain~Smears\footnotemark[3] \and Martin~Vohral\'ik\footnotemark[3]}
\begin{document}

\renewcommand{\thefootnote}{\fnsymbol{footnote}}
\footnotetext[2]{Universit\'e Paris-Est, CERMICS (ENPC), 77455 Marne-la-Vall\'{e}e cedex 2, France (alexandre.ern@enpc.fr).}
\footnotetext[3]{INRIA Paris, 2 Rue Simone Iff, 75012 Paris, France (iain.smears@inria.fr, martin.vohralik@inria.fr)}
\renewcommand{\thefootnote}{\arabic{footnote}}

\numberwithin{equation}{section}

\maketitle

\begin{abstract}
We establish the existence of liftings into discrete subspaces of $\Hdiv$ of piecewise polynomial data on locally refined simplicial partitions of polygonal/polyhedral domains. Our liftings are robust with respect to the polynomial degree. This result has important applications in the a posteriori error analysis of parabolic problems, where it permits the removal of so-called transition conditions that link two consecutive meshes. It can also be used in a the posteriori error analysis of elliptic problems, where it allows the treatment of meshes with arbitrary numbers of hanging nodes between elements. We present a constructive proof based on the a posteriori error analysis of an auxiliary elliptic problem with $H^{-1}$ source terms, thereby yielding results of independent interest. In particular, for such problems, we obtain guaranteed upper bounds on the error along with polynomial-degree robust local efficiency of the estimators.
\end{abstract}
\section{Introduction and main results}\label{sec:intro}

We study in this paper two different but connected problems that we introduce separately.

\subsection{Discrete $p$-robust $\Hdiv$-liftings}

First, we are interested in the problem of finding liftings of piecewise polynomial data into piecewise polynomial subspaces of $\Hdiv$, that are robust with respect to the polynomial degree ($p$-robust).
More precisely, let $\Om \subset\R^{\dim}$, $d\in \{2,3\}$, be a bounded Lipschitz polygonal/polyhedral connected open set.
We consider $\bL\coloneqq L^2(\Om;\R^\dim)$, and $\Hdivo \coloneqq \{\bv \in \bL, \, \nabla{\cdot}\bv \in L^2(\Om)\}$.
Consider a partition of the boundary $\Gamma$ of $\Omega$ into two connected components $\GD$ and $\GN$. Let $\calT$ be a given conforming, simplicial, possibly locally refined mesh of $\Om$. We assume that $\calT$ matches $\GD$ and $\GN$ in the sense that every boundary face of the mesh $\calT$ is fully contained either in $\GD$ or in $\GN$.
Let $f$ be a given scalar function and let $\bxi$ be a given vector field, such that $f$
and $\bxi$ are piecewise-polynomials with respect to $\calT$.
We consider the question of finding a piecewise polynomial vector field $\sh$ in the Raviart--Thomas--N\'ed\'elec subspace of $\Hdivo$ over the mesh $\calT$, such that $\nabla{\cdot}\sh = f$ in $\Om$, $\sh{\cdot}\bn=0$ on $\GN$, and such that $\norm{\sh + \bxi}$ is quasi-minimal in the sense of satisfying a bound of the form
\begin{equation} \label{eq:rob_lift}
    \norm{\sh+\bxi} \lesssim \min_{\substack{\bv \in \Hdivo \\ \nabla{\cdot}\bv
    = f \text{ in }\Om \\ \bv{\cdot} \bn = 0\text{ on }\GN}} \norm{\bv + \bxi}.
\end{equation}
The notation $a\lesssim b$ means that $a\leq C b$, with a constant $C$ that can only depend on the shape-regularity of $\calT$ and on the space dimension $\dim$, but is otherwise independent of the domain $\Om$, of the size of the mesh elements in $\calT$, and crucially of the polynomial degree.
Note that the converse bound in~\eqref{eq:rob_lift} holds trivially with constant 1, since $\sh$ is a member of the minimization set considered in the right-hand side.

Problem~\eqref{eq:rob_lift} is also known as the problem of finding a stable right-inverse of the divergence operator, and it plays an important role in numerical analysis.
We remark that many classical approaches based on projection operators that possess commuting diagram properties are not sufficient for $p$-robustness.
Recently, building on~\cite{CostabelMcIntosh2010,DemkowiczGopalakrishnanSchoberl2012}, Braess, Pillwein, and Sch\"oberl~\cite{BraessPillweinSchoberl2009} showed the existence of discrete $p$-robust $\Hdiv$-liftings of piecewise-polynomial data on a patch of triangular elements sharing a common vertex, see~\cite[Thm.~7]{BraessPillweinSchoberl2009}.
The extension to three space dimensions is given in~\cite[Thm.~2.2]{ErnVohralik2016a}. These results imply that equilibrated flux a posteriori error estimators for elliptic problems are $p$-robust. In the present context, the results of~\cite{BraessPillweinSchoberl2009,ErnVohralik2016a} establish the existence of a discrete $p$-robust $\Hdiv$-lifting satisfying~\eqref{eq:rob_lift} when $\calT$ is a set of elements sharing a common vertex, with $\Om$ being the patch composed of these elements.
The existence of $\Hdiv$-liftings on locally refined meshes (and not just on element patches around mesh vertices) has been recently studied in~\cite{AinsworthGuzmanSayas2016}, where the authors considered vanishing interior source terms but nonzero boundary data and studied liftings for a fixed polynomial degree, with constants typically depending on it.

We now present the first main contribution of this work on problem~\eqref{eq:rob_lift}. Let $H^1_*(\Om) \coloneqq H^1_{\GD}(\Om)$ be the subspace of functions in $H^1(\Om)$ with vanishing trace on $\GD$ if $\GD$ is nontrivial, and otherwise (that is, if $\GN=\DO$), let $H^1_*(\Om)\coloneqq H^1(\Om)/\R$ be the space of functions in $H^1(\Om)$ with mean-value zero.
For an integer $p\geq 0$, let $\RTN \subset \bL $ denote the space of piecewise Raviart--Thomas--N\'ed\'elec vector fields of order $p$ with respect to $\calT$; note that in the present notation, we do not impose $\Hdivo$-conformity on $\RTN$. For further details on the notation, see Section~\ref{sec:set} below. Our first main result on discrete $p$-robust $\Hdiv$-liftings is the following theorem.

\begin{theorem}\label{thm:flux_stability} Let $p\geq 1$. For any $f \in \calP_{p-1}(\calT)$ and $\bxi \in \RTNl$, satisfying $(f,1)=0$ if $\GN=\DO$, we have
\begin{equation}\label{eq:stability_rtn}
\min_{\substack{\bvh  \in \Hdivo\cap \RTN \\ \nabla{\cdot}\bvh = f\text{ in }\Om \\ \bvh {\cdot} \bn = 0\text{ on }\GN }} \norm{\bvh + \bxi } \lesssim \min_{\substack{\bv \in \Hdivo \\ \nabla{\cdot}\bv = f \text{ in }\Om \\ \bv{\cdot} \bn = 0 \text{ on }\GN}} \norm{\bv + \bxi}
 =  \max_{v\in H^1_*(\Om)\setminus\{0\}}\frac{ (f,v) - (\bxi , \nabla v)}{\norm{\nabla v}}.
\end{equation}
\end{theorem}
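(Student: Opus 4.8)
\emph{The right-hand equality} is a classical duality fact. The set $\{\bv\in\Hdivo : \nabla{\cdot}\bv=f,\ \bv{\cdot}\bn=0\text{ on }\GN\}$ is a nonempty closed affine subset of $\bL$ (nonempty by surjectivity of $\nabla{\cdot}$ under the stated compatibility condition), and the minimiser $\bv^\star$ of the strictly convex $\bv\mapsto\norm{\bv+\bxi}^2$ over it is characterised by $(\bv^\star+\bxi,\bm w)=0$ for all $\bm w$ in the tangent space $\{\bm w\in\Hdivo : \nabla{\cdot}\bm w=0,\ \bm w{\cdot}\bn=0\text{ on }\GN\}$. By the $\bL$-orthogonal Helmholtz decomposition for the mixed boundary setting, the orthogonal complement of this tangent space is exactly $\nabla H^1_*(\Om)$ (closed thanks to the Poincaré--Friedrichs inequality on $H^1_*(\Om)$), so $\bv^\star+\bxi=\nabla\phi$ for a unique $\phi\in H^1_*(\Om)$. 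Integration by parts against $v\in H^1_*(\Om)$, using that $\bv^\star$ lies in the minimisation set, shows that $\phi$ solves $(\nabla\phi,\nabla v)=(f,v)-(\bxi,\nabla v)$ up to an overall sign, whence $\norm{\bv^\star+\bxi}=\norm{\nabla\phi}=\sup_{v\neq0}\big((f,v)-(\bxi,\nabla v)\big)/\norm{\nabla v}$ by Cauchy--Schwarz with equality at $v=\phi$.

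\emph{The inequality} is proved constructively, in the spirit of equilibrated-flux reconstructions. Write $\ell(v):=(f,v)-(\bxi,\nabla v)$, so the goal is $\norm{\bvh+\bxi}\lesssim\norm{\ell}_*:=\sup_{v\neq0}\ell(v)/\norm{\nabla v}$ for some admissible $\bvh$. Let $\{\psia\}_{\ver\in\calV}$ be the hat-function partition of unity, $\sum_\ver\psia=1$, subordinate to the vertex patches $\oma$ with meshes $\Ta$. A preliminary lowest-order solve restores a discrete orthogonality: let $u_h^0$ be the conforming piecewise-affine finite element approximation of the auxiliary problem, $(\nabla u_h^0,\nabla v_h)=\ell(v_h)$ on the piecewise-affine subspace of $H^1_*(\Om)$; then $\norm{\nabla u_h^0}\le\norm{\ell}_*$, and $\ell(\psia)=(\nabla u_h^0,\nabla\psia)$ whenever $\psia\in H^1_*(\Om)$ (in the pure-Neumann case this uses $(f,1)=0$). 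I would then set $\bvh:=\sum_{\ver\in\calV}\bvh^\ver$, where $\bvh^\ver\in\RTNa\cap\Hdiva$ is supported in $\oma$ and solves the local constrained minimisation: minimise $\norm{\btau+\psia\bxi}_{\oma}$ over $\btau\in\RTNa$ subject to $\nabla{\cdot}\btau=\psia f-\nabla\psia{\cdot}(\bxi+\nabla u_h^0)$ (a polynomial of admissible degree, possibly after one harmless projection) and $\btau{\cdot}\bn=0$ on $\partial\oma$ away from $\GD$. For interior vertices the divergence datum has zero mean over $\oma$, namely $\ell(\psia)-(\nabla u_h^0,\nabla\psia)=0$, so the local problems are well posed; vertices on $\GD$ need no such compatibility, and those on $\GN$ or on $\overline{\GD}\cap\overline{\GN}$ are handled analogously. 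Since $\sum_{\ver\in K}\psia=1$ on every $K\in\calT$, assembling produces $\bvh\in\Hdivo\cap\RTN$ with $\nabla{\cdot}\bvh=f$ and $\bvh{\cdot}\bn=0$ on $\GN$.

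\emph{For the estimate}, $\bvh+\bxi=\sum_\ver(\bvh^\ver+\psia\bxi)$ since $\sum_\ver\psia=1$; as each point of $\Om$ belongs to at most $\dim+1$ patches, it suffices to bound $\sum_\ver\norm{\bvh^\ver+\psia\bxi}_{\oma}^2\lesssim\norm{\ell}_*^2$. For each patch, the $p$-robust single-patch $\Hdiv$-lifting result (\cite[Thm.~7]{BraessPillweinSchoberl2009} for $\dim=2$, \cite[Thm.~2.2]{ErnVohralik2016a} for $\dim=3$, i.e.\ this very theorem applied with $\Om=\oma$) together with the patch version of the duality above controls $\norm{\bvh^\ver+\psia\bxi}_{\oma}$ by the local dual norm of $v\mapsto\ell(\psia v)$, plus a contribution from $\nabla u_h^0$ and any projection remainder. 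The remaining ingredient is the localisation $\sum_\ver\norm{\ell(\psia\,{\cdot})}_{*,\oma}^2\lesssim\norm{\ell}_*^2$: writing $\norm{\ell(\psia\,{\cdot})}_{*,\oma}^2=\ell(\psia u_\ver)$ for the local Riesz representative $u_\ver$ and summing, $\sum_\ver\norm{\ell(\psia\,{\cdot})}_{*,\oma}^2=\ell\big(\sum_\ver\psia u_\ver\big)\le\norm{\ell}_*\,\norm{\nabla\sum_\ver\psia u_\ver}$, and by finite overlap together with $\norm{\nabla(\psia u_\ver)}_{\oma}\lesssim\norm{\nabla u_\ver}_{\oma}$ — a local Poincaré--Friedrichs inequality absorbing $\norm{u_\ver\nabla\psia}_{\oma}$, legitimate because $u_\ver$ is mean-free over $\oma$ or vanishes on $\partial\oma\cap\GD$ — one obtains $\norm{\nabla\sum_\ver\psia u_\ver}^2\lesssim\sum_\ver\norm{\nabla u_\ver}_{\oma}^2=\sum_\ver\norm{\ell(\psia\,{\cdot})}_{*,\oma}^2$, hence the claim. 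The $\nabla u_h^0$ contributions sum to $\norm{\nabla u_h^0}^2\le\norm{\ell}_*^2$. Combining yields $\norm{\bvh+\bxi}\lesssim\norm{\ell}_*$, which with the equality established first (and the trivial reverse bound between the two minima) proves the theorem.

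\emph{The main obstacle} is to keep every constant simultaneously independent of $p$ and of the local mesh refinement. The $p$-robustness is inherited verbatim from the single-patch result of \cite{BraessPillweinSchoberl2009,ErnVohralik2016a}, whose constant depends only on patch shape-regularity; the other constants — the finite overlap $\dim+1$, the patchwise Poincaré--Friedrichs constants, the number of elements per patch — are controlled by shape-regularity alone and are insensitive to refinement, which is exactly why the result survives on locally refined meshes. I expect the genuinely delicate points to be (i) arranging the reduction to the single-patch result so that all local data are honest polynomials of admissible degree, in particular handling the degree mismatch between $\psia f$ and $\nabla{\cdot}\RTNa$ by exploiting that the associated remainders cancel elementwise, and (ii) the systematic treatment of the various boundary-vertex types, together with the role of the lowest-order solve in making every interior-patch problem well posed.
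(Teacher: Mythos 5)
Your proposal is correct and follows essentially the same route as the paper: an auxiliary lowest-order ($p'=1$) conforming Galerkin solve, patchwise constrained minimizations in $\RTNa$ with divergence datum $\psia f-\nabla\psia\cdot(\bxi+\nabla u_h)$, the $p$-robust single-patch stability of \cite{BraessPillweinSchoberl2009,ErnVohralik2016a}, partition-of-unity assembly with finite overlap, and the conclusion via $\norm{\nabla u_h}\le\norm{\nabla u}=\norm{\ell}_*$. The only (immaterial) differences are that the paper keeps $\psia\nabla u_h$ inside the local objective and peels off $\norm{\nabla u_h}$ globally at the end, and that it localizes through $(\nabla(u-u_h),\nabla(\psia v))_{\oma}$ rather than through your local Riesz representatives.
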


We emphasize that the constant in the stability bound~\eqref{eq:stability_rtn} does not depend on the polynomial degree $p$. The last equality in~\eqref{eq:stability_rtn} follows from classical equivalence results between primal and dual mixed formulations of elliptic problems. 
This identity is actually important in the proof of Theorem~\ref{thm:flux_stability}. The proof, as detailed below, is constructive and consists of the following two steps: first, we pose a primal problem using the data $f$ and $\bxi$ and approximate it using $H^1$-conforming finite elements of degree $p'=1$.
Then we use this approximate solution to build equilibrated fluxes around each vertex of $\calT$ by posing local minimization problems using Raviart--Thomas--N\'ed\'elec spaces of order $p$, and we use the discrete $p$-robust $\Hdiv$-liftings on patches that were described above.

Theorem~\ref{thm:flux_stability} requires that the discrete $\Hdiv$-conforming lifting be of one polynomial degree higher than the piecewise-polynomial data~$f$ and~$\bxi$. At present, we do not know if equal-order $\Hdiv$-liftings retain $p$-robustness for general data.
Nevertheless, the next theorem shows that equal-order $p$-robust $\Hdiv$-liftings are possible when the data take a certain specialised form, and when the norm on the right-hand side of the stability bound is slightly strengthened.
Let $h_\Omega$ stand for the diameter of $\Omega$ and let $\norm{{\cdot}}_{\infty}$ denote the $L^\infty$-norm.

\begin{theorem}\label{thm:flux_stability_2} Let $p\geq 1$. Let
$\psid\in H^1(\Om)\cap \calP_1(\calT)$ be a continuous piecewise affine function with respect to $\calT$. Let $\calFdag$ denote the set of boundary faces $F$ of the mesh $\calT$ such that $\psid|_F = 0$, and let $\Gd = \cup_{F\in\calFdag}F$. Then, for any $f\in \calP_{p-1}(\calT)$ and $\bxi\in\RTNl$, with $(f,\psid) = (\bxi,\nabla \psid)$ if $\Gd = \DO$, we have
\begin{subequations}\begin{align}
    \min_{\substack{\bvh  \in \Hdivo\cap \RTN \\ \nabla{\cdot}\bvh = \psid f- \nabla \psid {\cdot} \bxi \text{ in }\Om \\
    \bvh {\cdot} \bn = 0\text{ on }\Gd }} \norm{\bvh + \psid \bxi}
& \lesssim C(\Omega,\psid) \min_{\substack{\bv  \in \Hdivo \\ \nabla{\cdot}\bv = f \text{ in }\Om}} \norm{\bv + \bxi}  \label{eq:flux_stability_min} \\
& = C(\Omega,\psid)  \max_{v \in H^1_0(\Om)\setminus\{0\}} \frac{(f,v)-(\bxi,\nabla v)}{\norm{\nabla v}},\label{eq:flux_stability_2}
\end{align}\end{subequations}%
where $C(\Omega,\psid) = \norm{\psid}_{\infty} + \Comdag h_{\Omega} \norm{\nabla \psid}_{\infty}$ and $\Comdag$ is the Poincar\'e constant of the space $\Hdag \coloneqq H^1(\Om)/\R$ if $\Gd=\DO$, and $ \Hdag \coloneqq H^1_{\DO\setminus\Gd}(\Om)$ otherwise, i.e.\ $\norm{v} \leq \Comdag h_\Omega \norm{\nabla v}$ for all $v \in \Hdag$.
\end{theorem}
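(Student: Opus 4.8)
The plan is to reduce everything to the \emph{constructive} proof of Theorem~\ref{thm:flux_stability} applied to the unweighted data $(f,\bxi)$, and then to assemble the weighted lifting by re-weighting the local contributions with the \emph{nodal values} of $\psid$ and repairing the low-order defect that the weighting introduces in the divergence. First, the equality in~\eqref{eq:flux_stability_2} is the classical primal--dual equivalence: $\min_{\bv\in\Hdivo,\ \nabla{\cdot}\bv=f}\norm{\bv+\bxi}=\norm{\nabla u}$, where $u\in H^1_0(\Om)$ solves $(\nabla u,\nabla v)=(f,v)-(\bxi,\nabla v)$ for all $v\in H^1_0(\Om)$, and $\norm{\nabla u}$ equals the rightmost maximum as well. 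So it suffices to build $\bvh\in\Hdivo\cap\RTN$ with $\nabla{\cdot}\bvh=\psid f-\nabla\psid{\cdot}\bxi$ in $\Om$, $\bvh{\cdot}\bn=0$ on $\Gd$, and $\norm{\bvh+\psid\bxi}\lesssim C(\Om,\psid)\norm{\nabla u}$. I would then recall from the proof of Theorem~\ref{thm:flux_stability} (used with $\GD=\DO$, $\GN=\emptyset$) the $\calP_1$-conforming Galerkin approximation $u_h\in\calP_1(\calT)\cap H^1_0(\Om)$ of $u$ and, for each vertex $\ver\in\calV$ of $\calT$, the local flux $\sha\in\RTNa$ with $\nabla{\cdot}\sha=\psia f-\nabla\psia{\cdot}(\bxi+\nabla u_h)$ on $\oma$ and $\sha{\cdot}\bn=0$ on $\{\psia=0\}\cap\p\oma$, so that the zero-extension of $\sha$ lies in $\Hdivo$; moreover $\sum_{\ver\in\calV}\sha\in\Hdivo\cap\RTN$ has divergence $f$, and the $p$-robust patch estimates sum to $\sum_{\ver\in\calV}\norm{\sha+\psia(\bxi+\nabla u_h)}_{\oma}^2\lesssim\norm{\nabla u}^2$ (with the same constant as in Theorem~\ref{thm:flux_stability}).

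The lifting is then $\bvh:=\sum_{\ver\in\calV}\psid(\ver)\,\sha+\bm{c}_h$. Because $\psid$ is continuous and piecewise affine, $\psid=\sum_{\ver\in\calV}\psid(\ver)\psia$ and $\nabla\psid=\sum_{\ver\in\calV}\psid(\ver)\nabla\psia$, whence $\nabla{\cdot}\big(\sum_{\ver\in\calV}\psid(\ver)\sha\big)=\psid f-\nabla\psid{\cdot}\bxi-\nabla\psid{\cdot}\nabla u_h$. The defect $\nabla\psid{\cdot}\nabla u_h$ is piecewise constant, so I would take $\bm{c}_h\in\Hdivo\cap\RTN$ with $\nabla{\cdot}\bm{c}_h=\nabla\psid{\cdot}\nabla u_h$ and $\bm{c}_h{\cdot}\bn=0$ on $\Gd$ by invoking Theorem~\ref{thm:flux_stability} a second time, now with data $\nabla\psid{\cdot}\nabla u_h\in\calP_0(\calT)\subset\calP_{p-1}(\calT)$ and zero vector field, and boundary split $\GD=\DO\setminus\Gd$, $\GN=\Gd$; the compatibility condition needed when $\Gd=\DO$ holds because then $\psid\in\calP_1(\calT)\cap H^1_0(\Om)$ is admissible in the Galerkin problem for $u_h$, so $(\nabla\psid,\nabla u_h)=(f,\psid)-(\bxi,\nabla\psid)=0$. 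This gives $\nabla{\cdot}\bvh=\psid f-\nabla\psid{\cdot}\bxi$, and clearly $\bvh\in\Hdivo\cap\RTN$. For the boundary condition, fix a face $F\in\calFdag$: for each vertex $\ver$, either $\ver\notin F$ and then $\psia|_F=0$, hence $\sha{\cdot}\bn=0$ on $F$, or $\ver\in F\subset\Gd$ and then $\psid(\ver)=0$; in both cases $\psid(\ver)\,\sha{\cdot}\bn$ vanishes on $F$, so $\bvh{\cdot}\bn=0$ on $\Gd$.

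Finally, for the norm, using $\psid\bxi=\sum_{\ver\in\calV}\psid(\ver)\psia\bxi$ and $\psid\nabla u_h=\sum_{\ver\in\calV}\psid(\ver)\psia\nabla u_h$ one rearranges
\[
\bvh+\psid\bxi=\sum_{\ver\in\calV}\psid(\ver)\big(\sha+\psia(\bxi+\nabla u_h)\big)-\psid\nabla u_h+\bm{c}_h .
\]
By the finite overlap of the patches and the summed patch estimates, the first term is $\lesssim\norm{\psid}_{\infty}\big(\sum_{\ver\in\calV}\norm{\sha+\psia(\bxi+\nabla u_h)}_{\oma}^2\big)^{1/2}\lesssim\norm{\psid}_{\infty}\norm{\nabla u}$; the second is $\le\norm{\psid}_{\infty}\norm{\nabla u_h}\le\norm{\psid}_{\infty}\norm{\nabla u}$; and by the stability in Theorem~\ref{thm:flux_stability} for $\bm{c}_h$ together with the Poincar\'e inequality $\norm{v}\le\Comdag h_\Om\norm{\nabla v}$ on $\Hdag$, the third is $\lesssim\max_{v\in\Hdag\setminus\{0\}}\frac{(\nabla\psid{\cdot}\nabla u_h,v)}{\norm{\nabla v}}\le\Comdag h_\Om\norm{\nabla\psid}_{\infty}\norm{\nabla u_h}\le\Comdag h_\Om\norm{\nabla\psid}_{\infty}\norm{\nabla u}$. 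Adding up yields $\norm{\bvh+\psid\bxi}\lesssim C(\Om,\psid)\norm{\nabla u}$, which is~\eqref{eq:flux_stability_min}. The step I expect to require the most care is precisely the interplay between the re-weighting and the constraints: the constants $\psid(\ver)$ are what keep every local flux in $\RTN$ and produce a vanishing normal trace on $\Gd$ (via $\psia|_F=0$ for $\ver\notin F$ and $\psid(\ver)=0$ for $\ver\in F\in\calFdag$), while it is essential that commuting $\nabla{\cdot}$ past $\psid$ leaves only a \emph{piecewise-constant} defect, so that its equilibration costs merely a second, low-order application of Theorem~\ref{thm:flux_stability} and the polynomial-degree robustness of the final constant is inherited unchanged.
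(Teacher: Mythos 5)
Your proposal is correct and follows essentially the same route as the paper: weight the local patch fluxes by the nodal values $\psid(\ver)$, repair the piecewise-constant divergence defect $\nabla\psid{\cdot}\nabla u_h$ by a second (low-order) application of Theorem~\ref{thm:flux_stability} with $\GN=\Gd$, and assemble the norm bound from the $p$-robust patch estimates, the Galerkin stability $\norm{\nabla u_h}\le\norm{\nabla u}$, and the Poincar\'e inequality on $\Hdag$. The one place where you quietly depart from the literal construction in the proof of Theorem~\ref{thm:flux_stability} --- imposing $\sha{\cdot}\bn=0$ on all of $\{\psia=0\}\cap\p\oma$ rather than only on $\p\oma\setminus\GD$, which is what makes the normal trace vanish on faces of $\Gd$ opposite a boundary vertex --- is precisely the modification the paper introduces via the spaces $\Vad$, so you have identified and handled the key technical point.
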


Note that Theorem~\ref{thm:flux_stability_2} is indeed optimal with respect to the polynomial degrees of the data, as $\psid f \in \calP_{p}(\calT)$ and $\psid \bxi \in \RTN$. Note also that the multiplicative factor $\psid$ has been somehow factored out in the right-hand side of~\eqref{eq:flux_stability_min} which, in particular, entails that the infinite-dimensional minimization set does not enforce any prescription on the normal component of the flux at the boundary. The proof of Theorem~\ref{thm:flux_stability_2} is again constructive and uses as a key idea the link to a primal formulation as indicated by the right-hand side of~\eqref{eq:flux_stability_2}.

Theorem~\ref{thm:flux_stability_2} has two important immediate applications.
The first arises in the context of parabolic problems with mesh adaptation between time-steps. Previous a posteriori error analyses (see, e.g.~\cite{Verfurth2003}) required the so-called transition condition, which restricts the extent of mesh-adaptation between time-steps, since the constants in the efficiency of the estimators typically depended on the ratio of the sizes of the elements between the different meshes.
However, in~\cite{ErnSmearsVohralik2016a}, we were able to remove this restriction for the first time, thanks to Theorem~\ref{thm:flux_stability_2} which enables the construction of equilibrated flux a posteriori error estimates with efficiency bounds that do not depend on the mesh adaptation between time-steps.
We refer the reader to \cite{ErnSmearsVohralik2016a} for further details.
A second application of Theorem~\ref{thm:flux_stability_2} arises in the context of nonmatching meshes: following~\cite{DolejsiErnVohralik2016}, Theorem~\ref{thm:flux_stability_2} enables the construction of equilibrated flux a posteriori error estimates without any restriction on the number of levels of hanging nodes. We give the details in Appendix~\ref{sec:appendix}.

\subsection{A posteriori error analysis of problems with $H^{-1}$ source terms}

The study of discrete $\Hdiv$-liftings is connected to the a posteriori
error analysis of elliptic problems with source terms in $H^{-1}(\Om)$, as we
now explain. We start by noting that the right-hand side
of~\eqref{eq:stability_rtn} above corresponds to the $H^1$-seminorm of the
solution of the model problem
\begin{equation}\label{eq:pde}
\begin{aligned}
-\Delta u &= f + \nabla {\cdot} \bxi & & \text{in }\Om,\\
u &= 0 &&\text{on }\GD,\\
\nabla u {\cdot} \bn &= - \bxi{\cdot}\bn &&\text{on }\GN.
\end{aligned}
\end{equation}
We recall that for a general $\bxi \in \bL$, the source term $f+
\nabla{\cdot} \bxi$ and the Neumann boundary condition in \eqref{eq:pde} must
be interpreted in the sense of distributions. The weak formulation
of~\eqref{eq:pde} then looks for $u\in H^1_*(\Om)$ such that
\begin{equation}\label{eq:weak_pde}
(\nabla u,\nabla v) = (f,v) - (\bxi,\nabla v) \qquad \forall\, v\in H^1_*(\Om).
\end{equation}
In this work, we show that the a posteriori error analysis of problem~\eqref{eq:pde} leads to an inherently constructive, practical, and efficient way of computing discrete $p$-robust $\Hdiv$-liftings, thereby justifying Theorems~\ref{thm:flux_stability} and~\ref{thm:flux_stability_2}.
Furthermore, we study the a posteriori error analysis of \eqref{eq:pde} independently, in the most general case $f\in L^2(\Om)$ and $\bxi \in \bL$, because elliptic problems with distributional source terms arise in many other important applications, such as the computation of scalar potentials in the Helmholtz decomposition of vector fields.

In comparison to the literature on a posteriori error estimates for problems with source terms in $L^2(\Om)$, there are comparatively few works treating the case of distributional data. 
Cohen, DeVore, and Nochetto~\cite{CohenDevoreNochetto2012} propose a posteriori error estimates involving the sum of localized negative norms of the source term over the patches of the mesh and weighted norms of the jumps in the finite element solution over the faces of the mesh.
However, it is known from examples~\cite[p.~704]{CohenDevoreNochetto2012} that this estimator can significantly overestimate the error in some cases; this is due to the splitting of the residual into the source term and the jumps of the numerical
solution.
Recently, Kreuzer and Veeser \cite{KreuzerVeeser2016} derived a posteriori error estimates based on low-pass filters that are both reliable and efficient.
Furthermore, the a posteriori error analysis of problems with Dirac delta source terms, which do not lie in $H^{-1}$ if $\dim\geq 2$, is treated, for instance, in~\cite{AgnelliGarauMorin2014,ArayaBehrensRodrigues2006,GaspozMorinVeeser2016}.

The second main contribution of this work is to extend the results of~\cite{BraessPillweinSchoberl2009,ErnVohralik2016a}, where locally efficient and $p$-robust equilibrated flux error estimators for elliptic problems with sources in $L^2(\Om)$ are derived, to problems with source terms in $H^{-1}(\Om)$ such as~\eqref{eq:pde}.
In particular, we prove the following result (detailed notation is given in Section~\ref{sec:set}):
\begin{theorem}\label{thm:aposteriori}
Let $f\in L^2(\Om)$ and $\bxi \in \bL$, with $(f,1)=0$ if $\GN=\DO$, and let
$u\in H^1_*(\Om)$ be the weak solution of problem~\eqref{eq:pde} defined
by~\eqref{eq:weak_pde}. Let $p$ and $p^\prime$ be positive integers with $1
\leq p^\prime \leq p $, and let $u_h\in \Vh\coloneqq H^1_*(\Om)\cap
\calP_{p^\prime}(\calT)$ be the finite element approximation of $u$ such that
\begin{equation}\label{eq:num_scheme}
(\nabla u_h,\nabla v_h) = (f,v_h)-(\bxi,\nabla v_h)  \qquad \forall\, v_h\in \Vh.
\end{equation}
Let the equilibrated flux reconstruction $\sh \in \Hdivo\cap \RTN$ be defined by~\eqref{eq:sha_def} and~\eqref{eq:flux} below.
Then, we have the guaranteed upper bound on the error
\begin{equation}\label{eq:upper_bound}
\norm{\nabla(u-u_h)}^2 \leq \sum_{K\in\calT} \big[\norm{\sh + \bxi + \nabla u_h }_K + \tfrac{h_K}{\pi}\norm{f-\Pih f}_K\big]^2.
\end{equation}
Furthermore, for each $K\in\calT$, we have the local efficiency bound
\begin{equation}\label{eq:local_efficiency}
\norm{\sh + \bxi + \nabla u_h}_K \lesssim \sum_{\ver\in\VK} \big[ \norm{\nabla(u-u_h)}_{\oma} + \etaosca \big],
\end{equation}
where the local data oscillation $\etaosca$ defined by~\eqref{eq:osc} below.
Finally, the global efficiency can be summarized as
\begin{equation}\label{eq:global_efficiency}
\norm{\sh + \bxi + \nabla u_h} \lesssim \norm{\nabla(u-u_h)} + \left\{ \sum_{\ver\in\calV} [\etaosca]^2 \right\}^{\frac{1}{2}}.
\end{equation}
\end{theorem}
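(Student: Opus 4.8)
The plan is to establish the three assertions in turn. For the guaranteed upper bound~\eqref{eq:upper_bound} I would run the standard equilibrated-flux argument, which uses only the defining properties of $\sh$ from~\eqref{eq:flux} --- namely $\sh\in\Hdivo\cap\RTN$ with $\nabla{\cdot}\sh=\Pih f$ in $\Om$ and $\sh{\cdot}\bn=0$ on $\GN$ --- and \emph{not} Galerkin orthogonality. Set $e\coloneqq u-u_h\in H^1_*(\Om)$; testing~\eqref{eq:weak_pde} with $v=e$ gives $\norm{\nabla e}^2=(f,e)-(\bxi,\nabla e)-(\nabla u_h,\nabla e)$. Integrating $\sh$ by parts against $e$, the boundary terms vanish because $e$ has zero trace on $\GD$ and $\sh{\cdot}\bn=0$ on $\GN$ (using also $(f,1)=0$ when $\GN=\DO$), so $(\sh,\nabla e)=-(\Pih f,e)$ and hence $\norm{\nabla e}^2=(f-\Pih f,e)-(\sh+\bxi+\nabla u_h,\nabla e)$. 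Since $\Pih f$ reproduces elementwise means, $(f-\Pih f,e)_K=(f-\Pih f,e-\bar e_K)_K$ with $\bar e_K$ the mean of $e$ on $K$, and the Poincar\'e inequality on the convex simplex $K$ gives $\norm{e-\bar e_K}_K\le\tfrac{h_K}{\pi}\norm{\nabla e}_K$; bounding the remaining term elementwise by Cauchy--Schwarz, summing over $K\in\calT$, applying the discrete Cauchy--Schwarz inequality, and dividing by $\norm{\nabla e}$ yields exactly~\eqref{eq:upper_bound}.

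For the local efficiency~\eqref{eq:local_efficiency}, which is the heart of the matter, I would argue as follows. By~\eqref{eq:sha_def} one has $\sh=\sum_{\ver\in\calV}\sha$ with each $\sha$ supported in the patch $\oma$ and defined by a local constrained minimization over $\RTNa$; the partition-of-unity identity $\sum_{\ver\in\VK}\psia=1$ on $K$ gives $\sh+\bxi+\nabla u_h=\sum_{\ver\in\VK}\bigl(\sha+\psia(\bxi+\nabla u_h)\bigr)$ on $K$, so by the triangle inequality it suffices to bound $\norm{\sha+\psia(\bxi+\nabla u_h)}_{\oma}$ for each $\ver\in\VK$. The local minimization for $\sha$ is precisely of the type treated by Theorems~\ref{thm:flux_stability} and~\ref{thm:flux_stability_2} on the patch domain $\oma$ --- the interior-vertex case, with a patch-wide zero normal trace and a mean-value compatibility of the data, and the boundary-vertex case, with a homogeneous Dirichlet condition on part of $\p\oma$; its well-posedness (the compatibility of the local data) is exactly the Galerkin orthogonality~\eqref{eq:num_scheme} tested with the hat function $\psia\in\Vh$. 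Invoking the $p$-robust discrete stability on patches, the discrete local minimum is bounded, uniformly in $p$, by the continuous minimum over $\Hdiva$ with the same constraints, which by the primal--dual equivalence (the last identity of~\eqref{eq:stability_rtn}, read on $\oma$) equals the dual norm of the localized residual $R_\ver(v)\coloneqq(f,\psia v)_{\oma}-(\bxi,\nabla(\psia v))_{\oma}-(\nabla u_h,\nabla(\psia v))_{\oma}$ over the appropriate $H^1$-space on $\oma$ (mean-zero for interior vertices, partly zero-trace for boundary vertices); because the local data are projections of $f$ and $\bxi$, this differs from the dual norm of the exact residual $R_\ver(v)=(\nabla(u-u_h),\nabla(\psia v))_{\oma}$ (using~\eqref{eq:weak_pde}) only by the oscillation contributions collected in $\etaosca$. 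I would then conclude via $(\nabla(u-u_h),\nabla(\psia v))_{\oma}\le\norm{\nabla(u-u_h)}_{\oma}\norm{\nabla(\psia v)}_{\oma}\lesssim\norm{\nabla(u-u_h)}_{\oma}\norm{\nabla v}_{\oma}$, where the last step uses $\norm{\nabla\psia}_\infty\lesssim h_{\oma}^{-1}$ and the Poincar\'e/Friedrichs inequality $\norm{v}_{\oma}\lesssim h_{\oma}\norm{\nabla v}_{\oma}$ on the patch.

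The global bound~\eqref{eq:global_efficiency} then follows by squaring~\eqref{eq:local_efficiency}, summing over $K\in\calT$, using the discrete Cauchy--Schwarz inequality together with the uniformly bounded cardinality of $\VK$, and invoking the finite-overlap property of the patches $\{\oma\}_{\ver\in\calV}$ (a consequence of shape-regularity) to pass from $\sum_K\sum_{\ver\in\VK}[\cdot]^2$ to $\sum_{\ver\in\calV}[\cdot]^2$; a final use of finite overlap on $\sum_{\ver}\norm{\nabla(u-u_h)}_{\oma}^2\lesssim\norm{\nabla(u-u_h)}^2$ and a square root give~\eqref{eq:global_efficiency}. The main obstacle is the $p$-robust passage in the local efficiency argument from the discrete minimization defining $\sha$ to the dual norm of the localized residual: this is exactly where Theorems~\ref{thm:flux_stability} and~\ref{thm:flux_stability_2} (applied on the patches) are indispensable, and where the interior-versus-boundary-vertex dichotomy together with the merely $L^2$, respectively $\bL$, regularity of $f$ and $\bxi$ --- which forces the oscillation terms $\etaosca$ to appear and must be tracked carefully through the primal--dual identification --- demand the most attention.
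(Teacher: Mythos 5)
Your overall architecture matches the paper's proof: the upper bound via the dual-norm characterization of the error, integration by parts of $\sh$ using $\nabla{\cdot}\sh=\Pih f$ and $\sh{\cdot}\bn=0$ on $\GN$, and the elementwise Poincar\'e inequality with constant $h_K/\pi$; the local efficiency via the partition of unity, a $p$-robust patch stability bound, the primal--dual identification of the localized residual, and the splitting into the exact residual plus the oscillation terms of~\eqref{eq:osc}; and the global bound by summation with finite patch overlap. All of this is sound and is essentially what the paper does.

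There is, however, one genuine logical flaw: you invoke Theorems~\ref{thm:flux_stability} and~\ref{thm:flux_stability_2} ``applied on the patches'' as the indispensable ingredient for the local efficiency. In the paper these two theorems are \emph{consequences} of Theorem~\ref{thm:aposteriori} (Theorem~\ref{thm:flux_stability} is deduced from the global efficiency bound~\eqref{eq:global_efficiency}, and Theorem~\ref{thm:flux_stability_2} in turn uses Theorem~\ref{thm:flux_stability}), so your argument as written is circular. The correct ingredient is the \emph{prior} patch-level $p$-robust stability result of Braess, Pillwein, and Sch\"oberl in two dimensions and of Ern and Vohral\'ik in three dimensions, which the paper packages as Lemma~\ref{lem:main_stability_bound}; the whole point of Theorems~\ref{thm:flux_stability} and~\ref{thm:flux_stability_2} is to extend that vertex-patch result to general meshes, and this extension must not be presupposed here. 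Relatedly, those patch results require \emph{polynomial} data: the target $\psia(\bxi+\nabla u_h)$ in~\eqref{eq:sha_def} is not in $\RTNa$ for general $\bxi\in\bL$, so before applying the stability bound one must observe (as in Lemma~\ref{lem:main_stability_bound}) that the minimizer is unchanged upon replacing $\psia\bxi$ by $\bPi(\psia\bxi)$, the discrepancy $\norm{\psia\bxi-\bPi(\psia\bxi)}$ being absorbed into $\etaosca$. You do account for the oscillation terms in the end, but you should make this substitution explicit, since otherwise the stability theorem you are quoting does not literally apply. With the citation redirected to the patch-level results and this projection step made explicit, your proof coincides with the paper's.
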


We consider in Theorem~\ref{thm:aposteriori} the polynomial degrees $1 \leq p^\prime \leq p$; in the a posteriori analysis of the model problem~\eqref{eq:weak_pde}, one is typically interested in the situation where $p^\prime=p$. However, in the context of discrete $p$-robust $\Hdiv$-liftings, the approximation~$u_h$ only serves as a tool in the analysis, and the choice $p^\prime=1$ turns out to be sufficient for our purposes.

The rest of this paper is organized as follows. We detail the notation in Section~\ref{sec:set}. Section~\ref{sec:equilibrated_flux} then presents the equilibrated flux reconstruction in the setting of $H^{-1}$ source terms.
Sections~\ref{sec:aposteriori_thm_proof}, \ref{sec:flux_thm_proof}, and~\ref{sec:flux_stability_2} respectively prove Theorems~\ref{thm:aposteriori}, \ref{thm:flux_stability}, and~\ref{thm:flux_stability_2}, and Appendix~\ref{sec:appendix} illustrates an application of our results to a posteriori error estimation on meshes with an arbitrary number of levels of hanging nodes.

\section{Setting}\label{sec:set}

We summarize here briefly the notation used in this paper. For an arbitrary open subset $\omega\subset \Om$, we use $({\cdot},{\cdot})_{\om}$ to denote the $L^2$-inner product for scalar- or vector-valued functions on $\omega$, with associated norm $\norm{{\cdot}}_{\om}$. In the special case where $\om = \Om$, we drop the subscript notation, i.e. $\norm{{\cdot}}\coloneqq\norm{{\cdot}}_{\Om}$.
For each mesh element $K\in \calT$ and for a fixed integer $p\geq 1$, let $\calP_p(K)$ denote the space of polynomials of total degree at most $p$ on $K$.
Let $\calP_p(\calT) \subset L^2(\Om)$ denote the space of scalar piecewise-polynomials of degree at most $p$ over $\calT$ and let $\RTN\subset \bL$ denote the piecewise Raviart--Thomas--N\'ed\'elec space, defined by $\RTN \coloneqq \{ \bvh \in \bL,\;\bvh|_K \in \RTNK\}$, where $\RTNK \coloneqq \calP_p(K;\R^\dim) + \calP_p(K)\bm{x}$. Let $\bPi$ denote the vector $\bL$-orthogonal projection operator from $\bL$ onto $\RTN$.
Let $\Pih \colon L^2(\Om) \tends \calP_p(\calT)$ denote the scalar $L^2$-orthogonal projection operator from $L^2(\Om)$ onto $\calP_p(\calT)$.
Finally, let $\bPialt$ denote the vector $\bL$-orthogonal projection operator from $\bL$ onto $\calP_p(\calT;\R^d)$; note that $\bPialt$ is simply obtained by application of $\Pih$ component-wise. We also emphasize that all these projections are elementwise, in particular since $\Hdivo$-conformity is not imposed on the space $\RTN$.

Let $\calF$ denote the set of faces of the mesh, with $\calFext$ denoting the set of all boundary faces of the mesh. For each element~$K\in\calT$, $h_K$ stands for the diameter of $K$. Let $\calV$ denote the set of vertices of the mesh $\calT$. For each $\ver \in \calV$, the function $\psia$ is the hat function associated with $\ver$, and the set~$\oma$ is the interior of the support of $\psia$, with associated diameter $h_{\oma}$.
Furthermore, let $\Ta$ denote the restriction of the mesh $\calT$ to~$\oma$.
In the case where $\GD$ is nontrivial, a vertex $\ver\in \calV$ is said to belong to $\calVint$, the set of interior and Neumann boundary vertices, if $\ver \in \Omega \cup ( \DO\setminus \overline{\GD})$.
Otherwise $\ver$ belongs to $\calVext$, the set of Dirichlet boundary vertices. In the case where $\GD=\emptyset$ and $\GN=\DO$, all vertices are considered to be interior vertices and $\calVint \coloneqq \calV$. Finally, for each element $K\in \calT$, we collect in $\VK$ the set of vertices of $\calV$ belonging in $K$.

\section{Flux equilibration for elliptic problems with $H^{-1}$ source terms}\label{sec:equilibrated_flux}

The construction of the flux equilibration is based on independent local mixed finite element approximations of residual problems over the patches of elements around mesh vertices, in generalization of~\cite{Ainswort2010,BraessPillweinSchoberl2009,BraessSchoberl2008,DestuynderMetivet1999,ErnVohralik2015,ErnVohralik2016a}.
For each $\ver\in\calV$, let $\calP_p(\Ta)$, respectively $\RTNa$, be the restriction of $\calP_p(\calT)$, respectively $\RTN$, to the patch $\Ta$ around the vertex $\ver \in \calV$. The local spatial mixed finite element spaces $\Va$ and $\Qa$ are defined by
\[
\begin{aligned}
\Va & \coloneqq
\begin{cases}
   \left\{\bvh \in \Hdiva \cap \RTNa,\quad \bvh{\cdot} \bn =0\text{ on }\p\oma \right\} & \, \text{if }\ver\in\calVint,\\
    \left\{\bvh \in \Hdiva\cap \RTNa ,\quad \bvh{\cdot} \bn =0\text{ on }\p\oma\setminus\GD  \right\}& \, \text{if }\ver\in\calVext,
\end{cases}
 \\
\Qa & \coloneqq
\begin{cases}
     \left\{ q_h\in \calP_p(\Ta),\quad (q_h,1)_\oma = 0\right\} & \hspace{4.3cm} \text{if }\ver\in\calVint,\\
    \calP_p(\Ta) & \hspace{4.3cm}\text{if }\ver\in\calVext.
\end{cases}
\end{aligned}
\]
For each $\ver\in \calV$, let $\sha \in \Va$ be defined by
\begin{equation}\label{eq:sha_def}
\sha \coloneqq \arg \min_{\substack{\bvh \in \Va \\ \nabla{\cdot}\bvh  = g_h^{\ver} }} \norm{\bvh + \psia( \bxi + \nabla u_h)}_{\oma},
\end{equation}
where
\begin{equation} \label{eq:ga}
    g_h^{\ver}\coloneqq \Pih \big(\psia f - \nabla \psia {\cdot} (\bxi +
    \nabla u_h )\big)|_{\oma} = \left(\Pih(\psia f)- \nabla \psia {\cdot}\left( \bPialt \bxi +    \nabla u_h\right)\right)|_{\oma},
\end{equation}
where the last equality follows from $\Pih(\nabla \psia{\cdot} \bxi)= \nabla \psia {\cdot} \bPialt \bxi$, since $\nabla \psia$ is piecewise constant, and since $u_h$ is a piecewise polynomial of degree at most $p^\prime\leq p$.
It is important to note that $g_h^{\ver}$ satisfies the Neumann compatibility condition~$(g_h^{\ver},1)_\oma =0$ for all $\ver\in\calVint$, i.e.~$g_h^{\ver}\in \Qa$, thereby guaranteeing that $\sha$ from~\eqref{eq:sha_def} is well-defined.
Indeed, this is found by choosing the test function $v_h = \psia$ in~\eqref{eq:num_scheme} when $\GD \neq \emptyset$, as here $\psia \in \Vh \subset H^1_*(\Om)$.
When $\GD = \emptyset$, $\psia \notin \Vh \subset H^1_*(\Om)$ due to the mean-value zero condition on $H^1_*(\Om)$, but the compatibility condition $(f,1) =0$ implies that~\eqref{eq:num_scheme} also holds for $\psia$.

It is well-known that the Euler--Lagrange conditions
for~\eqref{eq:sha_def} are: find $\sha \in \Va$ and $\rha \in \Qa$ (the
Lagrange multiplier of the divergence constraint) such that
\begin{subequations}\label{eq:flux_system}
\begin{align}
&(\sha,\bvh)_\oma - (\nabla{\cdot} \bvh, \rha)_\oma  = - (\psia (\bxi + \nabla u_h) ,\bvh)_\oma   & & \forall\, \bvh \in \Va, \label{eq:flux_system_1}\\
&(\nabla{\cdot} \sha,q_{h})_\oma  =  (\psia f - \nabla \psia {\cdot} ( \bxi +\nabla  u_h),q_{h})_\oma  & & \forall\, q_{h}\in\Qa.\label{eq:local_equilibrium}
\end{align}
\end{subequations}
After extending each $\sha$ by zero in $\Om\setminus \oma$, we define the
equilibrated flux reconstruction $\sh \in \RTN$ by
\begin{equation}\label{eq:flux}
\sh \coloneqq \sum_{\ver\in\calV}\sha.
\end{equation}

\begin{lemma}\label{lem:fl_rec}
Let the equilibrated flux reconstruction $\sh \in \RTN$ be defined
by~\eqref{eq:flux}. Then, $\sh$ belongs to $\Hdivo$ and satisfies
\begin{subequations}\label{eq:equilibration}
\begin{alignat}{2}
\nabla{\cdot} \sh & = \Pih f \qquad & & \text{in }\Om, \label{eq:equilibration1}\\
\sh {\cdot} \bn & = 0 & &\text{on }\GN. \label{eq:equilibration2}
\end{alignat}
\end{subequations}
\end{lemma}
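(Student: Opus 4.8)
The plan is to establish the two claims — that $\sh\in\Hdivo$ and that it satisfies the equilibration identities~\eqref{eq:equilibration1}--\eqref{eq:equilibration2} — from the representation $\sh=\sum_{\ver\in\calV}\sha$ as a sum of patch contributions extended by zero outside $\oma$, together with the partition-of-unity identities $\sum_{\ver\in\calV}\psia\equiv 1$ and $\sum_{\ver\in\calV}\nabla\psia\equiv\bm{0}$ on $\Om$. The only genuinely delicate point is the face-by-face check of normal-trace continuity for the zero-extended $\sha$; once that local geometric bookkeeping is settled, the boundary condition $\sh\cdot\bn=0$ on $\GN$ is read off from the same analysis and the divergence identity reduces to a one-line computation.

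For the $\Hdivo$-conformity I would use the standard criterion that a piecewise polynomial vector field lies in $\Hdivo$ if and only if its normal component has no jump across any interior face of $\calT$. Fix such a face $F$, shared by elements $K_1,K_2$, and a vertex $\ver\in\calV$. If $\ver$ is a vertex of $F$, then both $K_1,K_2\in\Ta$, $F$ is an interior face of the patch mesh $\Ta$, and the normal trace of $\sha$ matches across $F$ since $\sha\in\Hdiva\cap\RTNa$. If $\ver$ is a vertex of, say, $K_1$ but not of $F$, then (a conforming simplicial mesh has exactly two elements meeting at an interior face, and the second simplex cannot contain $\ver$, as otherwise it would coincide with $K_1$) $K_2\notin\Ta$, so the relative interior of $F$ lies on $\p\oma$; hence the normal trace of $\sha$ vanishes on $F$ from the $K_1$-side, by the boundary condition built into $\Va$ — valid because an interior face lies in $\p\oma\setminus\GD$ — and is zero from the $K_2$-side by the zero extension. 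If $\ver\notin K_1\cup K_2$, then $\sha$ vanishes near $F$. In every case the jump of $\sha\cdot\bn$ across $F$ is zero, so summing over $\ver$ gives $\sh\in\Hdivo$. Running the same case analysis on a boundary face $F\subset\GN$ — using that $\GN$-faces lie in $\p\oma\setminus\GD$ and that vertices on $\GN\setminus\overline{\GD}$ belong to $\calVint$, for which $\Va$ imposes a vanishing normal trace on all of $\p\oma$ — shows $\sha\cdot\bn=0$ on $F$ for every $\ver$, whence~\eqref{eq:equilibration2}.

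For the divergence identity I would first note that, since each zero-extended $\sha$ belongs to $\Hdivo$ by the previous step, its distributional divergence is precisely the zero-extension of $\nabla{\cdot}\sha$, and $\nabla{\cdot}\sha=g_h^\ver$ on $\oma$ directly by the constraint in~\eqref{eq:sha_def} (alternatively, from~\eqref{eq:local_equilibrium} together with $\int_\oma\nabla{\cdot}\sha=\int_{\p\oma}\sha\cdot\bn=0$ to fix the constant when $\ver\in\calVint$). Using that $\Pih$ is element-wise, hence commutes with restriction to $\Ta$ and with the finite sum over $\ver$, and recalling the definition~\eqref{eq:ga} of $g_h^\ver$, I obtain
\[ \nabla{\cdot}\sh=\sum_{\ver\in\calV}g_h^\ver=\Pih\Big(f\sum_{\ver\in\calV}\psia-\Big(\sum_{\ver\in\calV}\nabla\psia\Big){\cdot}(\bxi+\nabla u_h)\Big)=\Pih f, \]
which is~\eqref{eq:equilibration1}.

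The main obstacle, as indicated above, is the geometric claim in the conformity step that an interior face $F$ of $\calT$ touched by the patch $\oma$ only "from one side" must lie on $\p\oma$, so that the boundary condition encoded in $\Va$ forces $\sha\cdot\bn|_F=0$. Verifying this cleanly is where the precise definitions of $\oma$, $\Ta$, the partition of $\calV$ into $\calVint$ and $\calVext$, and the assumed matching of $\calT$ with $\GD$ and $\GN$ are all used; everything else is routine.
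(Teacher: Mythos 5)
Your proposal is correct and follows essentially the same route as the paper: $\Hdivo$-conformity of each zero-extended $\sha$ via the normal-trace boundary conditions built into $\Va$, the Neumann condition~\eqref{eq:equilibration2} read off from the same boundary conditions on faces of $\GN$, and the divergence identity~\eqref{eq:equilibration1} from the divergence constraint in~\eqref{eq:sha_def} together with the partition of unity $\sum_{\ver\in\VK}\psia=1$ (hence $\sum_{\ver\in\VK}\nabla\psia=\bm{0}$) on each element. The paper only sketches the conformity step, so your face-by-face case analysis is a legitimate filling-in of the details rather than a different argument.
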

\begin{proof}
The proof follows closely the arguments in~\cite{BraessSchoberl2008,DestuynderMetivet1999,DolejsiErnVohralik2016,ErnVohralik2015}; we sketch it here for the sake of completeness.
First, for any $\ver\in\calV$, the zero extension of $\sha$ belongs to $\Hdivo$ as a result of the boundary conditions in $\Va$. Thus $\sh \in \Hdivo$.
Consider now any element $K \in \calT$ having a face $F$ contained in $\GN$. Then, for each vertex $\ver \in \VK$, the definition of $\Va$ requires that $\sha{\cdot} \bn =0 $ on $F$, thus implying that $\sh{\cdot} \bn =0 $ on $F$.
Since $\overline{\GN} = \cup_{F\subset\GN}\overline{F}$ by hypothesis, we deduce~\eqref{eq:equilibration2}.
Finally, to show~\eqref{eq:equilibration1}, we employ~\eqref{eq:sha_def} and~\eqref{eq:ga}: thus, on each $K \in \Ta$, $\nabla {\cdot} \sh|_K = \sum_{\ver\in\VK} \nabla {\cdot} \sha|_K = \sum_{\ver\in\VK} \big[\Pih \big(\psia f - \nabla \psia {\cdot} (\bxi + \nabla u_h )\big)\big]|_K = \Pih f|_K$, since the hat functions $\{\psia\}_{\ver\in \VK} $ form a partition of unity over $K$.

\end{proof}

For a given vertex $\ver \in \calV$, let the space~$H^1_*(\oma)$ be defined
by
\[
H^1_*(\oma) \coloneqq \begin{cases}
 \{v\in H^1(\oma),\quad (v,1)_{\oma} = 0\} &\text{if }\ver \in \calVint, \\
 \{v\in H^1(\oma),\quad v|_{\p \oma\cap \GD} =0\} &\text{if }\ver\in\calVext.
\end{cases}
\]
We also set $\btaua \coloneqq \bPi(\psia \bxi) + \psia \nabla u_h$.
Then we have the following crucial stability result.
\begin{lemma}\label{lem:main_stability_bound}
For each $\ver\in\calV$, let $\sha$ be defined by~\eqref{eq:sha_def}. Then
\begin{equation}\label{eq:main_stability_bound}
\norm{\sha + \btaua }_{\oma}
\lesssim \min_{\substack{\bm{\sigma} \in \bm{V}^{\ver} \\ \nabla{\cdot}\bm{\sigma} = g_h^{\ver} }} \norm{\bm{\sigma} + \btaua} = \max_{v\in H^1_*(\oma)\setminus\{0\}}
 \frac{ (g_h^{\ver},v)_{\oma} - (\btaua,\nabla v)_{\oma}}{\norm{\nabla v}_{\oma}},
\end{equation}
where $\bm{V}^{\ver}$ denotes the set of all vector fields $\bm{\sigma}\in H(\Div,\oma)$ such that $\bm{\sigma}{\cdot}\bn =0$ either on $\p\oma$ if $\ver\in\calVint$, or on $\p\oma\setminus \GD$ if $\ver\in\calVext$.
\end{lemma}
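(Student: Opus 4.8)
The plan is to first replace the target field $\psia(\bxi+\nabla u_h)$ in the discrete minimization \eqref{eq:sha_def} by the \emph{piecewise polynomial} field $\btaua$, and then to recognize the resulting constrained minimization over $\Va$ as the patchwise counterpart of Theorem~\ref{thm:flux_stability}. The inequality in \eqref{eq:main_stability_bound} will then follow from the discrete $p$-robust $\Hdiv$-liftings on vertex patches established by Braess, Pillwein and Sch\"oberl in \cite[Thm.~7]{BraessPillweinSchoberl2009} for $\dim=2$ and by Ern and Vohral\'ik in \cite[Thm.~2.2]{ErnVohralik2016a} for $\dim=3$, while the equality is the classical equivalence between the constrained dual minimization and a primal $H^1$-problem, already used for the right-hand side of \eqref{eq:stability_rtn}.

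For the reduction, the key observation is that $\psia\nabla u_h$ is elementwise polynomial of degree at most $p^\prime\le p$, hence left invariant by $\bPi$, so that $\bPi\big(\psia(\bxi+\nabla u_h)\big)=\bPi(\psia\bxi)+\psia\nabla u_h=\btaua$ and $\psia(\bxi+\nabla u_h)-\btaua=(I-\bPi)(\psia\bxi)$ on $\oma$. Since any $\bvh\in\Va$ satisfies $\bvh+\btaua\in\RTNa$ while $(I-\bPi)(\psia\bxi)$ is elementwise $\bL$-orthogonal to $\RTNa$, Pythagoras gives $\norm{\bvh+\psia(\bxi+\nabla u_h)}_\oma^2=\norm{\bvh+\btaua}_\oma^2+\norm{(I-\bPi)(\psia\bxi)}_\oma^2$ with the last term independent of $\bvh$. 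Hence $\sha$ from \eqref{eq:sha_def} is also the minimizer of $\bvh\mapsto\norm{\bvh+\btaua}_\oma$ over $\{\bvh\in\Va:\nabla{\cdot}\bvh=g_h^{\ver}\}$, which is the form I want.

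Next, I would invoke the patch results for the data $(\btaua,g_h^{\ver})$ on $\oma$. These data are admissible: $\btaua\in\RTNa$ by construction, and $g_h^{\ver}\in\Qa$ thanks to the Neumann compatibility $(g_h^{\ver},1)_\oma=0$ for $\ver\in\calVint$ recorded after \eqref{eq:ga} (no constraint is required for $\ver\in\calVext$), which also makes the minimization set on the right-hand side of \eqref{eq:main_stability_bound} nonempty. For $\ver\in\calVint$ the homogeneous normal-trace condition is imposed on all of $\p\oma$ (which in particular encodes the prescription on $\GN$), whereas for $\ver\in\calVext$ it is imposed only on $\p\oma\setminus\GD$, the set $\p\oma\cap\GD$ being where functions of $H^1_*(\oma)$ vanish; in either configuration the cited theorems yield $\norm{\sha+\btaua}_\oma\lesssim\min\{\norm{\bm{\sigma}+\btaua}:\bm{\sigma}\in\bm{V}^{\ver},\ \nabla{\cdot}\bm{\sigma}=g_h^{\ver}\}$ with a constant depending only on the shape regularity of the mesh and on $\dim$, and crucially not on $p$.

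Finally, the equality follows from the standard primal--dual identity: by optimality $\bm{\sigma}^\star+\btaua$, where $\bm{\sigma}^\star$ is the right-hand side minimizer, is $\bL$-orthogonal to all divergence-free fields in $\bm{V}^{\ver}$, hence equals $-\nabla r$ for the weak solution $r\in H^1_*(\oma)$ of $(\nabla r,\nabla v)_\oma=(g_h^{\ver},v)_\oma-(\btaua,\nabla v)_\oma$ for all $v\in H^1_*(\oma)$, so that the minimum equals $\norm{\nabla r}_\oma=\sup_{v\in H^1_*(\oma)\setminus\{0\}}\tfrac{(\nabla r,\nabla v)_\oma}{\norm{\nabla v}_\oma}$, i.e.\ the asserted maximum. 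I expect the only genuine obstacle to be the invocation of the patch liftings in the previous step: the $p$-independence of the constant is entirely inherited from \cite{BraessPillweinSchoberl2009,ErnVohralik2016a}, so the remaining work is to check that, after the reduction, the data and the boundary conditions built into $\Va$ and $\bm{V}^{\ver}$ match the hypotheses of those theorems for both interior and boundary vertices. The reduction step and the primal--dual identity are routine, but the former is essential, since the patch results require a piecewise polynomial target field.
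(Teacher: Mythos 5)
Your proposal is correct and follows essentially the same route as the paper: both reduce the minimization target from $\psia(\bxi+\nabla u_h)$ to $\btaua$ via the orthogonality of $(I-\bPi)(\psia\bxi)$ to $\RTNa$ (you via Pythagoras, the paper via the Euler--Lagrange system~\eqref{eq:flux_system_1}), and then invoke \cite[Thm.~7]{BraessPillweinSchoberl2009} and \cite[Thm.~2.2]{ErnVohralik2016a} on the patch. Your explicit verification of the primal--dual equality and of the admissibility of the data $(g_h^{\ver},\btaua)$ is a correct elaboration of what the paper delegates to the cited theorems.
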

\begin{proof} 
It follows from the definitions of the projectors $\bPi$ that
$(\btaua,\bvh)_{\oma} = (\psia (\bxi + \nabla u_h),\bvh)_{\oma}$ for all
$\bvh\in \Va$. Therefore,~\eqref{eq:flux_system_1} implies that
$(\sha,\bvh)_{\oma} - (\nabla{\cdot}\bvh,\rha)_{\oma} =-(\btaua,\bvh)_{\oma}$
for all $\bvh\in \Va$. We deduce that~\eqref{eq:sha_def} is equivalent to
\[
\sha = \arg \min_{\substack{\bvh \in \Va \\ \nabla {\cdot} \bvh = g_h^{\ver}}} \norm{\bvh + \btaua}_{\oma}.
\]
Since $g_h^\ver \in \calP_p(\Ta)$ and since $\btaua \in \RTNa$, we
obtain~\eqref{eq:main_stability_bound}
from~\cite[Thm.~7]{BraessPillweinSchoberl2009} in the case of two space
dimensions (up to straightforward adaptations for boundary vertices),
and~\cite[Thm.~2.2]{ErnVohralik2016a} in the case of three space
dimensions.\end{proof}

\section{Proof of Theorem~\ref{thm:aposteriori}}\label{sec:aposteriori_thm_proof}
The proof follows essentially the arguments in~\cite{BraessSchoberl2008,ErnVohralik2015}.

\subsection{Proof of the guaranteed upper bound~\eqref{eq:upper_bound}}\label{sec:upper}
It is straightforward to see from the fact that $u_h \in H^1_*(\Om)$ and from~\eqref{eq:weak_pde} that
\[
    \norm{\nabla(u-u_h)}  = \max_{v\in H^1_*(\Om)\setminus\{0\}} \frac{(f,v)-(\bxi,\nabla v)-(\nabla u_h,\nabla v)}{\norm{\nabla v}}.
\]
Consider $v\in H^1_*(\Om)$ such that $\norm{\nabla v}=1$. Then, by addition and subtraction and the facts that $\nabla{\cdot}\sh = \Pih f$ and that $\sh{\cdot}\bn=0$ on $\GN$ by Lemma~\ref{lem:fl_rec}, we have
\begin{equation}\label{eq:upper_bound_1}
(f,v)-(\bxi,\nabla v)-(\nabla u_h,\nabla v) = (f-\Pih f,v) - (\sh  +\bxi + \nabla u_h,\nabla v).
\end{equation}
Next, we note that since $f-\Pih f$ has mean-value zero over each $K\in \calT$, we obtain the bound $\abs{(f-\Pih f,v)_K }\leq \tfrac{h_K}{\pi}\norm{f-\Pih f}_K \norm{\nabla v}_K$ for each $K\in \calT$ by the Poincar\'e inequality. The upper bound~\eqref{eq:upper_bound} then follows from~\eqref{eq:upper_bound_1} and the Cauchy--Schwarz inequality.

\subsection{Proof of the local efficiency~\eqref{eq:local_efficiency}}\label{sec:proof_loc_eff}

Let us define the data oscillation as
\begin{equation}\label{eq:osc}
 [\etaosca]^2 \coloneqq \sum_{K\in \Ta} \left\{ \frac{h_{K}^2}{p^2}\norm{\psia f - \Pih(\psia f)}_{K}^2+
 \norm{\bxi - \bPialt\bxi}_{K}^2   + \norm{\psia \bxi - \bPi(\psia \bxi)}_{K}^2 \right\}.
\end{equation}
Recall the definition $\btaua \coloneqq \bPi(\psia \bxi) + \psia \nabla u_h$
and note that, for each $K\in \calT$,
\[
\begin{split}
\norm{\sh + \bxi + \nabla u_h}_K
& \leq \sum_{\ver\in\VK}\norm{\sha + \psia ( \bxi +  \nabla u_h )}_K\\
& \leq \sum_{\ver\in\VK}[\norm{\sha + \btaua }_K + \norm{\psia \bxi - \bPi(\psia \bxi)}_K]\\
& \leq \sum_{\ver\in\VK}[\norm{\sha + \btaua }_\oma + \etaosca].
\end{split}
\]
Employing Lemma~\ref{lem:main_stability_bound}, it is enough to bound the right-hand side \eqref{eq:main_stability_bound}. Fix $\ver \in \VK$ and consider $v\in H^1_*(\oma)$ such that $\norm{\nabla v}_{\oma} =1$. We then write $(g_h^{\ver},v)_{\oma} - (\btaua,\nabla v)_{\oma} = \sum_{i=1}^4
E_i$, where
\begin{align*}
E_1 &\coloneqq (f, \psia v)_{\oma} - (\bxi + \nabla u_h, \nabla(\psia v))_{\oma}, &&&
E_2 &\coloneqq (\Pih(\psia f)-\psia f,v)_{\oma},\\
E_3 &\coloneqq ( [ \bxi - \bPialt \bxi ] {\cdot} \nabla \psia ,v)_{\oma}, &&&
E_4 & \coloneqq (\psia \bxi - \bPi(\psia \bxi),\nabla v)_{\oma}.
\end{align*}
Extend $\psia v$ by zero outside of $\oma$; then two situations may arise.
Either $\GN = \DO$, where we use the fact that the weak formulation~\eqref{eq:weak_pde} holds for all test functions in $H^1(\Om)$ by the compatibility $(f,1)_\oma=0$.
Or $\GD \neq \emptyset$, where $\psia v \in H^1_*(\Om)=H^1_{\GD}(\Om)$ for any $v \in H^1_{*}(\oma)$. In both cases, we conclude from~\eqref{eq:weak_pde} that $(\nabla u,\nabla (\psia
v))_{\oma} = (f, \psia v)_{\oma} - (\bxi,\nabla(\psia v))_{\oma}$.
Therefore, we see that $E_1=(\nabla(u-u_h),\nabla(\psia v))_{\oma}$ and thus $\abs{E_1}\leq \norm{\nabla(u-u_h)}_{\oma}\norm{\nabla(\psia v)}_{\oma}$.
Next, we recall that there is a constant depending only on the mesh shape-regularity such that $\norm{\nabla(\psia v)}_{\oma} \lesssim\norm{\nabla v}_{\oma}$ for all $v\in H^1_*(\oma)$ for every $\ver \in \calV$, see~\cite{BraessPillweinSchoberl2009,ErnVohralik2015}.
Hence, using the hypothesis that $\norm{\nabla v}_{\oma}=1$, we find that
$\abs{E_1}\lesssim \norm{\nabla(u-u_h)}_{\oma}$.
Next, we have $E_2 = (\Pih(\psia f)-\psia f, v- \Pih v)$ by orthogonality of the $L^2$-projection, and thus we find that $\abs{E_2}^2\lesssim \sum_{K\in\Ta} \tfrac{h_K^2}{p^2} \norm{\psia f- \Pih(\psia f)}_{K}^2$ by using the approximation bound $\norm{v-\Pih v}_K \lesssim \tfrac{h_K}{p}\norm{\nabla v}_K$ for all $v\in H^1(K)$. Finally, we find that $\abs{E_3} \lesssim \norm{\bxi - \bPialt \bxi}_{\oma}$, and that $\abs{E_4}\leq \norm{\psia\bxi-\bPi(\psia \bxi)}_{\oma}$.
This completes the proof of local efficiency~\eqref{eq:local_efficiency}.

\subsection{Proof of the global efficiency~\eqref{eq:global_efficiency}}
For each $K\in\calT$, we have $(\sh+\bxi + \nabla u_h)|_K = \sum_{\ver\in\VK} [\sha + \psia(\bxi+\nabla u_h)]|_K$ by the partition of unity.
Noting that each element $K$ has $(\dim+1)$ vertices collected in the set $\VK$ since $\calT$ is a simplicial mesh, the Cauchy--Schwarz inequality implies that
\[
\begin{split}
\norm{\sh+\bxi+\nabla u_h}^2 & = \sum_{K\in\calT}\norm{\sh+\bxi+\nabla u_h}^2_K \\
& \leq \sum_{K\in\calT} (\dim +1)\sum_{\ver\in\VK}\norm{\sha+\psia (\bxi+\nabla u_h)}_K^2 \\
& = (\dim+1) \sum_{\ver\in\calV} \norm{\sha+\psia (\bxi+\nabla u_h)}_{\oma}^2.
\end{split}
\]
It follows from the arguments of Section~\ref{sec:proof_loc_eff} that $\norm{\sha+\psia (\bxi+\nabla u_h)}_{\oma} \lesssim \norm{\nabla(u-u_h)}_{\oma}+\etaosca$.
Therefore,
\[
\norm{\sh+\bxi+\nabla u_h}^2 \lesssim \sum_{\ver\in\calV} \{ \norm{\nabla(u-u_h)}_{\oma}^2 + [\etaosca]^2 \} \lesssim \norm{\nabla(u-u_h)}^2 + \sum_{\ver\in\calV}[\etaosca]^2,
\]
which completes the proof of global efficiency~\eqref{eq:global_efficiency}.

\section{Proof of Theorem~\ref{thm:flux_stability}}\label{sec:flux_thm_proof}

We will show here that Theorem~\ref{thm:flux_stability} follows easily from
Theorem~\ref{thm:aposteriori}, while using the finite element solution $u_h$
of~\eqref{eq:num_scheme} as an auxiliary ingredient of the proof. Here
it is enough to take $p^\prime=1$ for the polynomial degree
in~\eqref{eq:num_scheme}. Henceforth, suppose that $f \in \calP_{p-1}(\calT)$
and that $\bxi \in \RTNl$ and construct $\sh$ by~\eqref{eq:flux} using the
local minimization problems~\eqref{eq:sha_def}.

Since $\sh \in \Hdivo \cap \RTN$ satisfies $\nabla{\cdot}\sh =  \Pih f
= f$ and $\sh {\cdot} \bn=0$ on $\GN$ by Lemma~\ref{lem:fl_rec}, we have
\begin{equation}\label{eq:proof_thm_flux_1}
\min_{\substack{\bvh  \in \Hdivo\cap \RTN \\ \nabla{\cdot}\bvh = f\text{ in }\Om \\ \bvh {\cdot} \bn = 0\text{ on }\GN }} \norm{\bvh + \bxi } \leq \norm{\sh + \bxi}.
\end{equation}
Therefore it remains only to show that $\norm{\sh + \bxi}$ is bounded by the
right-hand side of~\eqref{eq:stability_rtn}. First, for each vertex $\ver \in
\calV$, note that $\psia f \in \calP_p(\Ta)$ and that $\psia \bxi \in
\RTNa$. Thus the oscillation terms $\etaosca$ given by~\eqref{eq:osc}
vanish. Consequently, the global efficiency
bound~\eqref{eq:global_efficiency} implies that $\norm{\sh + \bxi + \nabla
u_h} \lesssim \norm{\nabla (u-u_h)}$. Furthermore, the stability of the
Galerkin method (recall that the finite element solution $u_h$
of~\eqref{eq:num_scheme} is an orthogonal projection of the weak solution $u$
of~\eqref{eq:weak_pde} from $H^1_*(\Om)$ to $\Vh \subset
H^1_*(\Om)$) implies that $\norm{\nabla u_h}\leq \norm{\nabla u}$.
Therefore we deduce that
\begin{equation}\label{eq:proof_thm_flux_2}
\norm{\sh + \bxi} \leq \norm{\sh + \bxi + \nabla u_h} + \norm{\nabla u_h} \lesssim \norm{\nabla u}.
\end{equation}
Since $u$ is the weak solution of~\eqref{eq:weak_pde}, the equivalence of
primal and dual formulation of elliptic problems implies that
\begin{equation}\label{eq:proof_thm_flux_3}
\norm{\nabla u} = \max_{v\in H^1_*(\Om)\setminus\{0\}}\frac{ (f,v) - (\bxi , \nabla v)}{\norm{\nabla v}} =  \min_{\substack{\bv \in \Hdivo \\ \nabla{\cdot}\bv = f \text{ in }\Om \\ \bv{\cdot} \bn = 0 \text{ on }\GN}} \norm{\bv + \bxi} .
\end{equation}
The combination of the bounds~\eqref{eq:proof_thm_flux_1},~\eqref{eq:proof_thm_flux_2}, and~\eqref{eq:proof_thm_flux_3} yields~\eqref{eq:stability_rtn}.

\section{Proof of Theorem~\ref{thm:flux_stability_2}}\label{sec:flux_stability_2}
As in Section~\ref{sec:flux_thm_proof}, we merely employ the weak
solution $u$ of~\eqref{eq:weak_pde} and its finite element approximation
$u_h$ of~\eqref{eq:num_scheme} as tools. For this purpose, we now
set $\GD = \DO$ and $\GN = \emptyset$, so that $H^1_*(\Om) =
H^1_0(\Om)$, and we choose the auxiliary polynomial degree $p^\prime=1$. We
will construct an equilibrated flux $\shd$ in the discrete minimization set
of the left-hand side of~\eqref{eq:flux_stability_min} such that $\norm{\shd
+ \psid \bxi}$ is bounded by the right-hand side
of~\eqref{eq:flux_stability_2}. The key idea is to write
\begin{equation}\label{eq:shd_definition}
\begin{aligned}
\shd \coloneqq  \shu + \shc , & & &  \shu \coloneqq \sum_{\ver\in\calV} w_{\ver} \shad,
\end{aligned}
\end{equation}
where $\shu$ is an uncorrected high-order flux obtained from the local fluxes
$\shad$, for all $\ver\in\calV$ (see Subsection~\ref{sec:shu_construction}),
and where $\shc$ is a global, low-order, correction flux (see
Subsection~\ref{sec:shc_construction}), and the weights $w_\ver$ result from
\begin{equation}\label{eq:psid_weight_def}
\psid = \sum_{\ver\in\calV} w_{\ver} \psia \quad\text{in }\Om, \qquad w_{\ver}=\psid(\ver), \quad \forall\, \ver\in\calV.
\end{equation}
The correction term $\shc$ is needed, since it will be found below that
$\nabla{\cdot}\shu = \psid f - \nabla \psid {\cdot}(\bxi +\nabla u_h)$; thus
we shall build $\shc$ (by posing a global low-order minimization problem) so
that it satisfies $\nabla{\cdot} \shc = \nabla \psid {\cdot}\nabla u_h$, in
order to ensure that $\shd$ satisfies the divergence constraint required by
the discrete minimization set of the left-hand side of
\eqref{eq:flux_stability_min} (see subsection~\ref{sec:shd_construction}).
The stability properties of $\shc$ then rely on
Theorem~\ref{thm:flux_stability} (in the low-order case $p'=1$), whereas the
stability properties of $\sha$ are established by using similar ideas to
those that were used to prove Theorem~\ref{thm:aposteriori}.

\subsection{Construction of $\shu$}\label{sec:shu_construction}

We construct locally a higher-order $\shad \in \Hdiva\cap \RTNa$ for each
$\ver\in \calV$, similarly to the construction of $\sha$ from
Section~\ref{sec:equilibrated_flux}. We do so in the context $\GD = \DO$ and
$\GN = \emptyset$; consequently, the sets of vertices simplify to interior
and boundary ones $\calVint = \{ \ver \in \calV; \; \ver \in \Om\}$ and
$\calVext = \{ \ver \in \calV; \; \ver \in \DO\}$. For each $\ver\in \calV$,
let the subset $\Gamma_{\ver}\subset \oma$ be given by those faces where the
hat function $\psia$ vanishes, i.e.~$\Gamma_{\ver}\coloneqq \{x \in \p\oma,\;
\psia(x) =0 \}$. Equivalently, $\Gamma_{\ver}$ is composed of all faces $F\in
\calF$ that are contained in $\p\oma$ and that do not contain the vertex
$\ver$; we denote this corresponding set of faces by $\Fa$. If
$\ver\in\calVint$ is an interior vertex, then $\Gamma_{\ver} = \p\oma$. If
$\ver\in\calVext$, then any interior face $F \in \calF\setminus \calFext$
such that $F \subset \p\oma $ necessarily belongs to $\Fa$. If $\Fa$ only
consists of such interior faces, then $\Gamma_{\ver} = \p\oma \setminus \GD =
\p\oma\setminus \DO$. However, $\Gamma_{\ver}$ and $\p\oma \setminus \GD$ do
not coincide for a vertex $\ver$ where $\Gamma_{\ver}$ includes boundary
faces $F\subset \GD$ opposite to the vertex $\ver$. In any case,
$\Gamma_{\ver}$ is always a strict subset of $\p\oma$ for boundary vertices.

For each $\ver\in \calV$, we let the discrete space $\Vad$ be defined by
\[
\Vad  \coloneqq  \left\{\bvh \in \Hdiva\cap \RTNa ,\quad \bvh{\cdot} \bn =0\text{ on }\Gamma_{\ver} \right\} \qquad \forall\, \ver\in\calV.
\]
For interior vertices, $\Vad$ coincides with the space $\Va$ of Section~\ref{sec:equilibrated_flux}. However, in general $\Vad \neq \Va$ for boundary vertices.
In this section, $\Vad$ is used instead of $\Va$ for a technical point appearing in the proof of Lemma~\ref{lem:shd_equilibrium} below concerning~\eqref{eq:shd_equilibrium_2} in situations where some elements may have all faces belonging to the boundary.
For each $\ver\in \calV$, analogously to~\eqref{eq:sha_def}, we let $\shad  \in \Vad$ be defined by
\begin{equation}\label{eq:shad_def}
\shad \coloneqq \arg \min_{\substack{\bvh \in \Vad \\ \nabla{\cdot}\bvh  = g_h^{\ver} }} \norm{\bvh + \psia( \bxi + \nabla u_h)}_{\oma},
\end{equation}
where $g_h^{\ver}$ is given by~\eqref{eq:ga}. Note that $\shad$ is
well-defined for all interior vertices $\ver\in \calVint$, since
$(g_h^{\ver}, 1)_\oma =0$ for $\ver\in \calVint$. In the case of boundary
vertices $\ver\in\calVext$, $\shad$ is also well-defined, since there are
always at least some faces of $\p\oma$ that are not in $\Gamma_{\ver}$.
Finally, the extension by zero of $\shad$ from $\oma$ to $\Om$ is again
$\Hdivo$-conforming. The uncorrected high-order flux function $\shu \in
\RTN \cap \Hdivo $ is then defined by
\begin{equation}\label{eq:shu_def}
 \shu \coloneqq \sum_{\ver\in\calV} w_{\ver} \shad.
\end{equation}
We have the following key result.
\begin{lemma}\label{lem:shad_stability}
Let $\shad$ be defined by~\eqref{eq:shad_def}. Then, we have
\begin{equation}\label{eq:shad_stability}
\norm{\shad + \psia (\bxi + \nabla u_h)}_{\oma} \lesssim \norm{\nabla(u-u_h)}_{\oma}.
\end{equation}
\end{lemma}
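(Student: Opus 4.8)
The plan is to mirror the argument used in the efficiency proof of Theorem~\ref{thm:aposteriori} (Section~\ref{sec:proof_loc_eff}), since the local problem~\eqref{eq:shad_def} defining $\shad$ is identical to that defining $\sha$ in~\eqref{eq:sha_def} except that the boundary condition is imposed on $\Gamma_{\ver}$ rather than on all of $\p\oma\setminus\GD$. First I would observe that, exactly as in the proof of Lemma~\ref{lem:main_stability_bound}, the definition of the projector $\bPi$ gives $(\btaua,\bvh)_{\oma} = (\psia(\bxi+\nabla u_h),\bvh)_{\oma}$ for all $\bvh \in \Vad$, so~\eqref{eq:shad_def} is equivalent to minimizing $\norm{\bvh + \btaua}_{\oma}$ over $\bvh \in \Vad$ with $\nabla{\cdot}\bvh = g_h^{\ver}$. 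Since $g_h^{\ver} \in \calP_p(\Ta)$ and $\btaua \in \RTNa$, and since $\Vad$ is exactly the Raviart--Thomas--N\'ed\'elec space with vanishing normal trace on the subset $\Gamma_{\ver}$ of $\p\oma$, the $p$-robust discrete $\Hdiv$-lifting result~\cite[Thm.~7]{BraessPillweinSchoberl2009} in two dimensions and~\cite[Thm.~2.2]{ErnVohralik2016a} in three dimensions applies (up to the straightforward adaptation to this choice of boundary portion), yielding
\[
\norm{\shad + \btaua}_{\oma} \lesssim \max_{v \in \widetilde{H}^1_*(\oma)\setminus\{0\}} \frac{(g_h^{\ver},v)_{\oma} - (\btaua,\nabla v)_{\oma}}{\norm{\nabla v}_{\oma}},
\]
where $\widetilde{H}^1_*(\oma)$ is the subspace of $H^1(\oma)$ with vanishing trace on $\Gamma_{\ver}$ (with the mean-value-zero normalization when $\Gamma_{\ver}=\p\oma$, i.e.\ for interior vertices).

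Next I would bound the dual quantity on the right. Fix $v$ with $\norm{\nabla v}_{\oma} = 1$ and vanishing trace on $\Gamma_{\ver}$, and split $(g_h^{\ver},v)_{\oma} - (\btaua,\nabla v)_{\oma} = \sum_{i=1}^4 E_i$ using exactly the decomposition from Section~\ref{sec:proof_loc_eff}, namely $E_1 = (f,\psia v)_{\oma} - (\bxi+\nabla u_h,\nabla(\psia v))_{\oma}$, $E_2 = (\Pih(\psia f) - \psia f, v)_{\oma}$, $E_3 = ([\bxi - \bPialt\bxi]{\cdot}\nabla\psia, v)_{\oma}$, and $E_4 = (\psia\bxi - \bPi(\psia\bxi),\nabla v)_{\oma}$. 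The key point is that $\psia v$, extended by zero outside $\oma$, lies in $H^1_0(\Om) = H^1_*(\Om)$: indeed $\psia$ vanishes on $\p\oma\setminus\Gamma_{\ver}$ while $v$ vanishes on $\Gamma_{\ver}$, so the product vanishes on all of $\p\oma$, and in the present setting ($\GD = \DO$) this is precisely the condition for membership in $H^1_0(\Om)$. Hence~\eqref{eq:weak_pde} applies with test function $\psia v$, giving $E_1 = (\nabla(u-u_h),\nabla(\psia v))_{\oma}$, so $\abs{E_1} \le \norm{\nabla(u-u_h)}_{\oma}\norm{\nabla(\psia v)}_{\oma} \lesssim \norm{\nabla(u-u_h)}_{\oma}$ by the discrete-commutator-type bound $\norm{\nabla(\psia v)}_{\oma} \lesssim \norm{\nabla v}_{\oma}$ valid for all $v$ in $\widetilde{H}^1_*(\oma)$ (for which I would cite~\cite{BraessPillweinSchoberl2009,ErnVohralik2015}). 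The terms $E_2, E_3, E_4$ are controlled by oscillation-type quantities exactly as in Section~\ref{sec:proof_loc_eff}: $\abs{E_2} \lesssim \big(\sum_{K\in\Ta} \tfrac{h_K^2}{p^2}\norm{\psia f - \Pih(\psia f)}_K^2\big)^{1/2}$, $\abs{E_3} \lesssim \norm{\bxi - \bPialt\bxi}_{\oma}$, $\abs{E_4} \le \norm{\psia\bxi - \bPi(\psia\bxi)}_{\oma}$.

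Finally I would note that for the statement of Lemma~\ref{lem:shad_stability} the oscillation terms drop out entirely: the hypotheses of Theorem~\ref{thm:flux_stability_2} are $f \in \calP_{p-1}(\calT)$ and $\bxi \in \RTNl$, whence $\psia f \in \calP_p(\Ta)$ and $\psia\bxi \in \RTNa$, so that $\Pih(\psia f) = \psia f$, $\bPialt\bxi = \bxi$, and $\bPi(\psia\bxi) = \psia\bxi$. Thus $E_2 = E_3 = E_4 = 0$ and only the $E_1$ contribution remains, yielding $\norm{\shad + \btaua}_{\oma} \lesssim \norm{\nabla(u-u_h)}_{\oma}$. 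To conclude, I would return from $\btaua$ to $\psia(\bxi+\nabla u_h)$ via the triangle inequality and $\norm{\btaua - \psia(\bxi+\nabla u_h)}_{\oma} = \norm{\bPi(\psia\bxi) - \psia\bxi}_{\oma} = 0$, giving~\eqref{eq:shad_stability}. I expect the only subtle point to be the verification that $\psia v$ extended by zero is genuinely in $H^1_0(\Om)$ for boundary vertices $\ver\in\calVext$, which hinges precisely on the choice of $\Gamma_{\ver}$ (the zero set of $\psia$ on $\p\oma$) as the portion where the normal trace is constrained in $\Vad$ and where $v$ vanishes in $\widetilde{H}^1_*(\oma)$ — this is exactly the technical reason, alluded to in the paragraph preceding~\eqref{eq:shad_def}, for working with $\Vad$ rather than $\Va$.
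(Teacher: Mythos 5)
Your overall strategy is exactly the paper's: invoke the $p$-robust patch lifting of~\cite{BraessPillweinSchoberl2009,ErnVohralik2016a} to bound $\norm{\shad+\psia(\bxi+\nabla u_h)}_{\oma}$ by a dual norm over a local test space, test the weak formulation~\eqref{eq:weak_pde} with $\psia v$ extended by zero, and note that the oscillation terms vanish since $\psia f\in\calP_p(\Ta)$ and $\psia\bxi\in\RTNa$. However, there is a concrete error in your identification of the local test space for boundary vertices. The primal--dual equivalence pairs a zero normal-flux constraint on a portion of $\p\oma$ with test functions that are \emph{free} there and vanish on the \emph{complementary} portion: since $\Vad$ imposes $\bvh{\cdot}\bn=0$ on $\Gamma_{\ver}$, the dual maximization must run over $H^1_{\p\oma\setminus\Gamma_{\ver}}(\oma)$, i.e.\ functions with vanishing trace on $\p\oma\setminus\Gamma_{\ver}$ (this is what the paper uses), not over functions vanishing on $\Gamma_{\ver}$ as you write. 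With your choice the asserted inequality is false in general: for $v$ vanishing only on $\Gamma_{\ver}$, integrating $(\nabla{\cdot}\bm{\sigma},v)_{\oma}$ by parts leaves the uncontrolled boundary term $\langle\bm{\sigma}{\cdot}\bn,v\rangle_{\p\oma\setminus\Gamma_{\ver}}$, so the maximum over that space does not bound the constrained minimum.

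The error is then masked by a second, contradictory claim: you write that ``$\psia$ vanishes on $\p\oma\setminus\Gamma_{\ver}$ while $v$ vanishes on $\Gamma_{\ver}$''. But $\Gamma_{\ver}$ is by definition the zero set of $\psia$ on $\p\oma$ (you state this correctly yourself in your final sentence), so $\psia$ vanishes on $\Gamma_{\ver}$ and does \emph{not} vanish on $\p\oma\setminus\Gamma_{\ver}$. With your test space, $\psia v$ would therefore fail to vanish on $\p\oma\setminus\Gamma_{\ver}$ for a boundary vertex, its zero extension would not lie in $H^1_0(\Om)$, and the identification $E_1=(\nabla(u-u_h),\nabla(\psia v))_{\oma}$ via~\eqref{eq:weak_pde} would break down. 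The repair is simply to swap the two boundary portions: take $v\in H^1_{\p\oma\setminus\Gamma_{\ver}}(\oma)$ and use that $\psia$ vanishes on $\Gamma_{\ver}$; then $\psia v$ vanishes on all of $\p\oma$, and the remainder of your argument (the vanishing of $E_2$, $E_3$, $E_4$ under the data assumptions of Theorem~\ref{thm:flux_stability_2}, and the final return from $\bPi(\psia\bxi)+\psia\nabla u_h$ to $\psia(\bxi+\nabla u_h)$) coincides with the paper's proof.
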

\begin{proof}
Similarly to the proof of Lemma~\ref{lem:main_stability_bound}, we
apply~\cite[Thm.~7]{BraessPillweinSchoberl2009} in the case of two space
dimensions, and~\cite[Thm~2.2]{ErnVohralik2016a} in the case of three
space dimensions, to deduce that
\[
\norm{\shad + \psia (\bxi + \nabla u_h)}_{\oma} \lesssim  \max_{v\in \widetilde{H}^1_*(\oma)\setminus\{0\}}
\frac{(f,\psia v)_{\oma}-(\bxi+\nabla u_h,\nabla(\psia v))_{\oma}}{\norm{\nabla v}_{\oma}},
\]
where $\widetilde{H}^1_*(\oma)\coloneqq H^1(\oma)/\R$ if $\ver\in\calVint$,
and $\widetilde{H}^1_*(\oma)\coloneqq H^1_{\p\oma\setminus
\Gamma_{\ver}}(\oma)$ if $\ver\in\calVext$. For any $\ver\in\calV$, we have
$\psia v \in H^1_0(\oma)$ for any $v \in \widetilde{H}^1_*(\oma)$, where we
use the fact that $\psia$ vanishes on $\Gamma_{\ver}$ by definition in the
case of $\ver\in\calVext$. We then use~\eqref{eq:weak_pde}, recalling that
$\GD = \DO$ and that $H^1_*(\Om) = H^1_0(\Om)$ here, so that
\[
    (f,\psia v)_{\oma}-(\bxi+\nabla u_h,\nabla(\psia v))_{\oma} = (\nabla (u - u_h),\nabla(\psia v))_{\oma}.
\]
Thus we obtain~\eqref{eq:shad_stability} from an  application of the Cauchy--Scwharz inequality and the Poincar\'e inequality, using $\norm{\nabla(\psia v)}_{\oma} \lesssim\norm{\nabla v}_{\oma}$ for $v \in \widetilde{H}^1_*(\oma)$, as in Section~\ref{sec:proof_loc_eff}.
\end{proof}

\subsection{Construction of $\shc$}\label{sec:shc_construction}

We now select a global low-order correction $\shc \in \Hdivo \cap
\RTNc$ such that $\nabla {\cdot}\shc = \nabla \psid {\cdot} \nabla u_h$ in
$\Om$, $\shc {\cdot} \bn =0$ on $\Gd$, and such that
\begin{equation}\label{eq:shc}
\norm{\shc } \lesssim \max_{v \in \Hdag \setminus\{0\}} \frac{(\nabla \psid {\cdot} \nabla u_h, v)}{\norm{\nabla v}}.
\end{equation}
This is possible by applying Theorem~\ref{thm:flux_stability}.
Indeed, we employ it in the setting where the Neumann part of the boundary $\DO$ is given by $\Gd$, where the Dirichlet part is $\DO\setminus \Gd$, and where the scalar datum is given by $\nabla \psid {\cdot} \nabla u_h \in \calP_{p^\prime-1}(\calT)$ and the vector datum is zero.
The data compatibility condition for the case where $\Gd = \DO$ is guaranteed, since we can then choose $\psid$ as the test function in~\eqref{eq:num_scheme} (recall that $\psid\in\Vh$ if $\Gd = \DO$), thereby yielding
\[
(\nabla \psid {\cdot} \nabla u_h,1) = (\nabla u_h,\nabla \psid) = (f,\psid)-(\bxi,\nabla\psid)=0,
\]
where the last identity is obtained from the hypothesis on $f$ and $\bxi$ of Theorem~\ref{thm:flux_stability_2} for the case $\Gd=\DO$.
It follows from~\eqref{eq:shc} that
\begin{equation}\label{eq:shc_stability}
\norm{\shc } \leq \Comdag h_{\Om} \norm{\nabla \psid}_{\infty} \norm{\nabla u_h},
\end{equation}
where $\Comdag$ is the constant of the Poincar\'e inequality for $\Hdag$ and
$h_{\Omega}$ is the diameter of~$\Omega$.

\subsection{Admissibility of $\shd$} \label{sec:shd_construction}

Recalling the definitions of $\shu$ from Subsection~\ref{sec:shu_construction} and of $\shc$ from Subsection~\ref{sec:shc_construction}, we define $\shd \in \Hdivo \cap \RTN$ by~\eqref{eq:shd_definition}.
We now check that $\shd$ belongs to the minimization set of the left-hand side of~\eqref{eq:flux_stability_2}.
\begin{lemma}\label{lem:shd_equilibrium}
Let $\shd$ be defined by~\eqref{eq:shd_definition}. Then
\begin{subequations}
\begin{alignat}{2}
\nabla {\cdot} \shd  & = \psid f - \nabla \psid {\cdot} \bxi  & & \quad\text{in }\Om, \label{eq:shd_equilibrium_1} \\
\shd{\cdot}\bn  & = 0  & & \quad \text{on } \Gd.\label{eq:shd_equilibrium_2}
\end{alignat}
\end{subequations}
\end{lemma}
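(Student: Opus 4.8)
The plan is to verify the divergence identity \eqref{eq:shd_equilibrium_1} and the normal-trace identity \eqref{eq:shd_equilibrium_2} separately, starting from the decomposition $\shd = \shu + \shc$ of \eqref{eq:shd_definition} and combining the equilibrium properties of the local high-order fluxes $\shad$ from \eqref{eq:shad_def} with those of the global low-order correction $\shc$ recalled in Subsection~\ref{sec:shc_construction}, namely $\nabla{\cdot}\shc = \nabla\psid{\cdot}\nabla u_h$ in $\Om$ and $\shc{\cdot}\bn = 0$ on $\Gd$.

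For the divergence, I would argue elementwise. Fix $K\in\calT$; in $\shu = \sum_{\ver\in\calV} w_\ver\shad$ only the terms with $\ver\in\VK$ are nonzero on $K$, and the divergence constraint in \eqref{eq:shad_def} together with \eqref{eq:ga} gives $\nabla{\cdot}\shad|_K = \Pih\bigl(\psia f - \nabla\psia{\cdot}(\bxi + \nabla u_h)\bigr)|_K$. Since the weights $w_\ver$ are constants and $\Pih$ is linear, this yields $\nabla{\cdot}\shu|_K = \Pih\bigl(\sum_{\ver\in\VK} w_\ver(\psia f - \nabla\psia{\cdot}(\bxi+\nabla u_h))\bigr)|_K$. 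I would then invoke the identities $\sum_{\ver\in\VK} w_\ver\psia = \psid$ and $\sum_{\ver\in\VK} w_\ver\nabla\psia = \nabla\psid$ on $K$, which follow from \eqref{eq:psid_weight_def} since the remaining hat functions vanish on $K$, to identify the argument of $\Pih$ with $\psid f - \nabla\psid{\cdot}(\bxi+\nabla u_h)$. Because $f\in\calP_{p-1}(\calT)$, $\psid\in\calP_1(\calT)$, $\bxi\in\RTNl$, and both $\nabla\psid$ and $\nabla u_h$ (as $p'=1$) are piecewise constant, this function already has degree at most $p$, so $\Pih$ acts as the identity on it. Hence $\nabla{\cdot}\shu = \psid f - \nabla\psid{\cdot}(\bxi+\nabla u_h)$, and adding $\nabla{\cdot}\shc = \nabla\psid{\cdot}\nabla u_h$ cancels the $\nabla u_h$ terms and gives \eqref{eq:shd_equilibrium_1}.

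For the normal trace, I would fix a face $F\in\calFdag$, say with $F\subset\p K$, $K\in\calT$, and expand $\shu{\cdot}\bn|_F = \sum_{\ver\in\VK} w_\ver\,\shad{\cdot}\bn|_F$. If $\ver\in\VK$ is not a vertex of $F$, then $F$ is a boundary face contained in $\p\oma$ that does not contain $\ver$, hence $F\in\Fa$ and $F\subset\Gamma_\ver$, so $\shad{\cdot}\bn = 0$ on $F$ by the definition of $\Vad$. If instead $\ver$ is a vertex of $F$, then $w_\ver = \psid(\ver) = 0$ because $\psid$ vanishes on $F\in\calFdag$. In both cases the term vanishes, so $\shu{\cdot}\bn = 0$ on $\Gd$, and since $\shc{\cdot}\bn = 0$ on $\Gd$ we obtain \eqref{eq:shd_equilibrium_2}.

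I expect the delicate point to be the normal-trace step: its validity rests on the geometric description of $\Fa$ and $\Gamma_\ver$ from Subsection~\ref{sec:shu_construction}, and crucially on using the spaces $\Vad$ rather than the spaces $\Va$ of Section~\ref{sec:equilibrated_flux}. Here $\GD=\DO$, so the relevant face $F$ lies in $\GD$, and the boundary condition built into $\Va$ for a boundary vertex only controls the normal component on $\p\oma\setminus\GD$, which does not contain $F$, whereas the condition $\shad{\cdot}\bn = 0$ on $\Gamma_\ver$ imposed in $\Vad$ does. The divergence identity, by contrast, is essentially bookkeeping, the only point requiring care being that the argument of $\Pih$ has polynomial degree at most $p$, so the projection drops out.
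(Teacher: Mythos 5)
Your proposal is correct and follows essentially the same route as the paper: elementwise bookkeeping with the weighted partition of unity \eqref{eq:psid_weight_def} and the observation that $\Pih$ acts as the identity on the data for the divergence identity, and the split of $\VK$ into the $d$ vertices of $F$ (where $w_{\ver}=0$) and the opposite vertex (where $F\subset\Gamma_{\ver}$) for the normal trace. You also correctly pinpoint the role of $\Vad$ versus $\Va$, which is precisely the technical point the paper flags.
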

\begin{proof}
Since $f\in \calP_{p-1}(\calT)$ and since $\bxi \in \RTNl \subset \calP_{p}(\calT;\R^{\dim})$, we find that $\nabla {\cdot} \shad = \psia f - \nabla \psia {\cdot} ( \bxi + \nabla u_h)$ from~\eqref{eq:shad_def} and~\eqref{eq:ga}.
In consequence of \eqref{eq:psid_weight_def} and of the definition of $\shu$ in \eqref{eq:shu_def}, by proceeding as in the proof of Lemma~\ref{lem:fl_rec}, we then obtain $\nabla{\cdot}\shu = \psid f - \nabla \psid {\cdot}( \bxi + \nabla u_h)$ in $\Om$.
Therefore, using the definition of the flux correction $\shc$, it is found that
\[
\nabla {\cdot} \shd = \nabla{\cdot} \shu + \nabla{\cdot}\shc = \psid f - \nabla\psid {\cdot} \bxi,
\]
thereby showing~\eqref{eq:shd_equilibrium_1}.

Next, we show~\eqref{eq:shd_equilibrium_2}. Recall that $\shc {\cdot} \bn=0$ on $\Gd$ by construction, and that $\Gd = \cup_{F\in \calFdag}$, where $\calFdag$ is the set of faces $F\in \calFext$ such that $\psid|_F = 0$.
Consider a face $F \in \calFdag$, with its corresponding element $K \in \calT$. Then, for any of the $d$ vertices $\ver\in F $, we have $w_{\ver} = \psid(\ver) =0 $ by definition. Therefore $w_{\ver}\shad {\cdot} \bn =0$ on $F$ trivially for all $\ver \in F$. Since $K$ is a simplex, there is a unique remaining vertex $\ver$ opposing the face $F$. Thus $F\subset\Gamma_{\ver}$ and consequently $\shad {\cdot} \bn =0$ on $F$ by the definition of the space $\Vad$ and of the set $\Gamma_{\ver} = \{x \in \p\oma, \; \psia(x) =0\}$. In summary, for any $F\in \calFdag$, we have $w_{\ver} \shad {\cdot} \bn =0$ on $F$ for all $\ver \in \VK$, where $F\subset \overline{K}$, whence the assertion~\eqref{eq:shd_equilibrium_2} follows. 
\end{proof}

\subsection{Proof of~\eqref{eq:flux_stability_2}}
Recall that we consider the solution $u$ of~\eqref{eq:weak_pde} and
$u_h$ of~\eqref{eq:num_scheme} in the context $\GD = \DO$ and $\GN =
\emptyset$, so that $H^1_*(\Om) = H^1_0(\Om)$. Thus, we obtain the bound
\begin{equation}\label{eq:stability_2_u_value}
\norm{\nabla u_h} \leq \norm{\nabla u} = \max_{v \in H^1_0(\Om)\setminus\{0\}} \frac{(f,v)-(\bxi,\nabla v)}{\norm{\nabla v}}.
\end{equation}
Therefore, our last goal is to show that $\norm{\shd + \psid \bxi}$ can be bounded in terms of $\norm{\nabla u_h}$ and $\norm{\nabla u}$.

For each $K\in \calT$, we have $\shd|_K = \shc|_K + \sum_{\ver\in\VK} (w_{\ver}\shad)|_K$ from~\eqref{eq:shd_definition}.
So, the triangle inequality and the Cauchy--Schwarz inequality imply that
\begin{equation}\label{eq:shd_bound_1}
\norm{\shd + \psid \bxi }^2  \lesssim \norm{\shc}^2 + (\dim+1)\sum_{K\in\calT} \sum_{\ver\in\VK} \abs{w_{\ver}}^2 \norm{ \shad  + \psia (\bxi + \nabla u_h)}_{K}^2 + \norm{\psid\nabla u_h}^2,
\end{equation}
where we have also used~\eqref{eq:psid_weight_def} to obtain $\psid (\bxi + \nabla u_h)|_K = \sum_{\ver\in\VK} [w_{\ver} \psia (\bxi+\nabla u_h)]|_K$.
Lemma~\ref{lem:shad_stability} and a counting argument show that
\begin{equation}\label{eq:shd_bound_2}
\begin{split}
\sum_{K\in\calT}\sum_{\ver\in\VK} \abs{w_{\ver}}^2 \norm{\shad+\psia (\bxi + \nabla u_h)}_{K}^2 &\leq \norm{\psid}_{\infty}^2 \sum_{\ver\in\calV}\norm{\shad+\psia (\bxi + \nabla u_h)}_{\oma}^2 \\
& \lesssim \norm{\psid}_{\infty}^2 \sum_{\ver\in\calV}\norm{\nabla(u-u_h)}_{\oma}^2 \\
& \lesssim \norm{\psid}_{\infty}^2 \norm{\nabla(u-u_h)}^2,
\end{split}
\end{equation}
where the first inequality uses the fact that $\psid \in H^1(\Om)\cap \calP_1(\calT)$ and that the coefficients $\{w_{\ver}\}_{\ver\in\calV}$ are the nodal values of $\psid$ at the vertices of the mesh.
Therefore, the combination of~\eqref{eq:shd_bound_1} with~\eqref{eq:shc_stability} and
\eqref{eq:shd_bound_2} yields
\[
\norm{\shd + \psid \bxi}
\lesssim \Comdag h_{\Omega} \norm{\nabla \psid}_{\infty}\norm{\nabla u_h} + \norm{\psid}_{\infty}(\norm{\nabla(u-u_h)}+\norm{\nabla u_h}).
\]
We finally deduce~\eqref{eq:flux_stability_2} from the above inequality and from \eqref{eq:stability_2_u_value}.

\appendix
\normalsize

\section{Application to a posteriori error estimates on meshes with hanging nodes}\label{sec:appendix}

Equilibrated flux a posteriori error estimates for meshes with hanging nodes
are developed in~\cite{DolejsiErnVohralik2016} where the equilibration is
performed on patches $\om_{\verbis}$ corresponding to the support of hat
functions $\psi_{\verbis}$ associated with non-hanging nodes $\verbis$ of the
computational mesh and forming a partition of unity of the computational
domain, see~\cite[Assumption~2.1]{DolejsiErnVohralik2016}. It turns out that
a slight extension of the equilibration patch $\om_{\verbis}$
of~\cite{DolejsiErnVohralik2016} enables the removal of the usual dependence
of a posteriori efficiency constants on the number of levels of hanging
nodes, thereby allowing for a completely arbitrary number of levels of
hanging nodes. More precisely, it suffices to extend the equilibration patch
$\om_{\verbis}$ so that all the products $h_{\om_{\verbis}} \norm{\nabla
\psi_{\verbis}}_{\infty, \om_{\verbis}}$ are uniformly bounded. Then,
applying Theorem~\ref{thm:flux_stability_2} where the patch $\om_{\verbis}$
of~\cite{DolejsiErnVohralik2016} corresponds here to the domain $\Om$, and
the hat function $\psi_{\verbis }$ of~\cite{DolejsiErnVohralik2016}
corresponds here to the function $\psid$, we infer that the factor
$h_{\om_{\verbis}} \max_{\hat {\verbis } \in \widehat{\calV}_{\verbis}}
\norm{\nabla \psi_{\hat {\verbis}}}_{\infty,\om_{\hat {\verbis}}}$
of~\cite[Theorem~3.12]{DolejsiErnVohralik2016} can be replaced by the factor
$h_{\om_{\verbis}} \norm{\nabla \psi_{\verbis}}_{\infty, \om_{\verbis}}$,
i.e. $h_{\Omega}\norm{\nabla \psid}_{\infty}$ in the present notation. The
extension of the equilibration patch is illustrated in
Figure~\ref{fig_ext_equil}. This extension typically entails including
several layers of fine elements, so as to ensure that the factors
$h_{\om_{\verbis}} \norm{\nabla \psi_{\verbis}}_{\infty, \om_{\verbis}}$ are
uniformly bounded.
The price to pay to achieve robustness with respect to the level of hanging nodes is thus a somewhat more expensive computation of the equilibrated flux.
The proof of Theorem~\ref{thm:flux_stability_2} in Section~\ref{sec:flux_stability_2} shows that this cost can be significantly reduced to the solution of two low-order systems over the extended patch, followed by local high-order corrections within the extended patch.

\begin{figure}
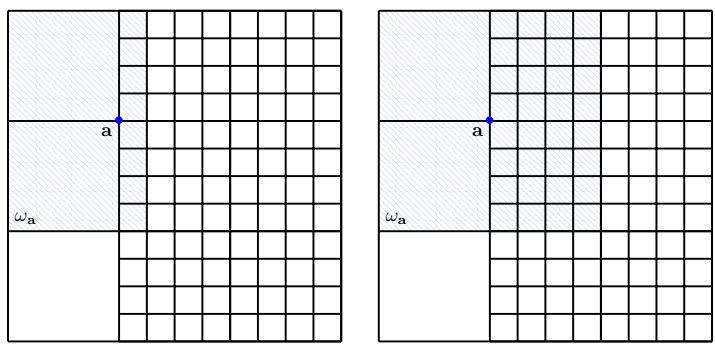

\centerline{\includegraphics[width=0.3\textwidth]{hang_or} \quad \includegraphics[width=0.3\textwidth]{hang_ext}}
\label{fig_ext_equil}
\caption{Original equilibration patches of reference~\cite{DolejsiErnVohralik2016} (left) and extended equilibration patches necessary for estimates robust with respect to an arbitrary number of levels of hanging nodes (right)}
\end{figure}

\end{document}